\documentclass[journal]{IEEEtran}
\usepackage{cite}
\usepackage{amsmath}
\usepackage{amssymb}
\usepackage{amsthm}
\usepackage{arydshln}
\usepackage{mathtools}
\usepackage{mathrsfs}
\usepackage{algorithmic}
\usepackage{graphicx}
\usepackage{epsfig,graphics}
\usepackage{epstopdf}
\usepackage{subfigure}
\usepackage{enumitem}
\usepackage{pifont}
\usepackage{color}
\usepackage{xcolor}

\usepackage{booktabs}
\usepackage{url}

\usepackage{multirow,ragged2e}
\usepackage{bm}
\usepackage{fixltx2e}
\usepackage[english]{babel}
\bibliographystyle{IEEEtran}

\theoremstyle{theorem}
\newtheorem{theorem}{Theorem}
\theoremstyle{lemma}

\theoremstyle{corollary}
\newtheorem{corollary}{Corollary}
\theoremstyle{definition}

\theoremstyle{assumption}

\theoremstyle{problem}

\theoremstyle{example}

\theoremstyle{proposition}
\newtheorem{proposition}{Proposition}

\newtheorem{remark}{Remark}

\newcommand{\sect}[1]{Section~\ref{#1}}

\newcommand{\eqs}[2]{Equations~(\ref{#1})-(\ref{#2})}
\newcommand{\eqsref}[2]{(\ref{#1})-(\ref{#2})}

\newcommand{\coro}[1]{Corollary~\ref{#1}}

\newcommand{\fbm}[1]{\mathbf{#1}}
\newcommand{\tbm}[1]{\fbm{#1}^\mathsf{T}}
\newcommand{\tfbm}[1]{\bm{#1}^\mathsf{T}}
\newcommand{\ibm}[1]{\fbm{#1}^{-1}}

\newcommand{\hbm}[1]{\hat{\fbm{#1}}}

\newcommand{\thbm}[1]{\hat{\fbm{#1}}^\mathsf{T}}
\newcommand{\tilbm}[1]{\tilde{\fbm{#1}}}

\newcommand{\ttilbm}[1]{\tilde{\fbm{#1}}^\mathsf{T}}

\newcommand{\dottbm}[1]{\dot{\fbm{#1}}^\mathsf{T}}
\newcommand{\dotbm}[1]{\dot{\fbm{#1}}}

\newcommand{\dothatbm}[1]{\dot{\hat{\fbm{#1}}}}

\newcommand{\dothattbm}[1]{\dot{\hat{\fbm{#1}}}^\mathsf{T}}

\newcommand{\ddotbm}[1]{\ddot{\fbm{#1}}}

\newcommand{\ddothbm}[1]{\ddot{\hbm{#1}}}

\begin{document}

\title{Dynamic Interconnection and Damping Injection for Input-to-State Stable Bilateral Teleoperation}

\author{Yuan~Yang, Daniela Constantinescu,~\IEEEmembership{Member,~IEEE,} and Yang Shi,~\IEEEmembership{Fellow,~IEEE}\thanks{The authors are with the Department of Mechanical Engineering, University of Victoria, Victoria, BC V8W 2Y2 Canada (e-mail: yangyuan@uvic.ca; danielac@uvic.ca; yshi@uvic.ca).}}

\maketitle

\begin{abstract}
In bilateral teleoperation, the human who operates the master and the environment which interacts with the slave are part of the force feedback loop. Yet, both have time-varying and unpredictable dynamics and are challenging to model. A conventional strategy for sidestepping the demand for their models in the stability analysis is to assume passive user and environment, and to control the master-communications-slave system to be passive as well. This paper circumvents the need to model the user and environment in a novel way: it regards their forces as external excitations for a semi-autonomous force feedback loop, which it outfits with a dynamic interconnection and damping injection controller that renders bilateral teleoperation with time-varying delays exponentially input-to-state stable. The controller uses the position and velocity measurements of the local robot and the delayed position transmitted from the other robot to robustly synchronize the master and slave under the user and environment perturbations. Lyapunov-Krasovskii stability analysis shows that the proposed strategy (i) can confine the position error between the master and slave to an invariant set, and (ii) can drive it exponentially to a globally attractive set. Thus, the dynamic interconnection and damping injection approach has practical relevance for telemanipulation tasks with given precision requirements. 
\end{abstract}

\IEEEpeerreviewmaketitle

\section{Introduction}\label{sec:introduction}

As a tool for remote sensing and manipulation, a bilateral teleoperator strives to synchronize its master and slave robots tightly, and to provide its human operator with useful haptic cues about the slave-environment interactions. Therefore, the bilateral teleoperation feedback loop includes the human who operates the master, the environment which interacts with the slave and the master-communications-slave system~\cite{Chopra2013Automatica}. Because operators vary their dynamics according to their volition, and environments are typically unknown, neither are predictable or trvial to model~\cite{Hirche2012Proceedings}. Additionally, the master and slave exchange information distorted by time-varying communication delays~\cite{Lee2009TRO}. Thus, bilateral teleoperation is a nonlinear, time-varying and interconnected system with communication delays and uncertain user and environment dynamics~\cite{Jiang2010Auto}.

The physical interactions between the robotic master-communications-slave system~(aka the teleoperator) and the user and the environment~(aka its external terminators) involve exchange of energy~\cite{Stramigioli2007}. As a key theory for modeling and controlling the exchange of energy among interconnected systems, passivity is ofen pivotal to the rigorous treatment of closed-loop teleoperation without user and environment models~\cite{Hokayem2006}. Existing research exploits the stability of the feedback interconnection of passive systems by assuming passive operator and environment and by employing Lyapunov-like analysis~\cite{Nuno2011} or energy monitoring~\cite{FrankenStramigiogli2011} to offer controllers which maintain teleoperators with time delays passive.

Scattering-based, damping injection and adaptive strategies can all be designed to provably stabilize bilateral teleoperation with a unified Lyapunov-like energy function. Scattering or wave-based control can render the time-delayed communication channel passive, as well as reduce wave reflections~\cite{Bate2011} and improve trajectory tracking~\cite{Ye2010,Sun2018} and transparency~\cite{Niemeyer2006, Yalcin-Ohnishi2010, Rebelo2015, Du2016, Sun2018}. Damping injection control, Proportional-Derivative plus damping~(PD+d)~\cite{Lee2006} or Proportional plus damping~(P+d)~\cite{Nuno2008}, and extensions to position-force architectures with and without gravity compensation~\cite{Hua2010,Hua2012,Hua2013}, output feedback~\cite{Hua2011} and bounded actuation~\cite{Hashemzadeh2013,Hua2015,Zhai2016} implement a virtual spring between, and local dampers at, the master and slave sites. Joint-space~\cite{Chopra2008,Nuno2010} and task-space~\cite{Liu2013,Liu2015} adaptive strategies can synchronize master and slave robots with uncertain parameters and constant delays.

Energy monitoring-based control can render the teleoperator passive by dynamic damping injection, dynamic modulation of the nominal control force, or both. Time-domain passivity control~\cite{Ryu2004,Pan2013} and extensions to eliminate position drift~\cite{Chawda2015} inject sufficient damping to dissipate the delay-induced energy at each step. The energy bounding~\cite{Ryu2010} and passive set-position modulation~(PSPM)~\cite{Lee2010} strategies regulate the nominal control inputs to ensure that the teleoperator generates less energy than its physical and control damping dissipates. The two-layer approach~\cite{FrankenStramigiogli2011,Heck2018} modulates the forces computed in the transparency layer and adds damping in the passivity layer to limit energy accumulation in the system.

Input-output stability provides another path to rigorous stability of time-delay systems with uncertain dynamics~\cite{Gu2003}. In particular, input-to-state stability has diminished the conservatism of passive strategies in haptic rendering~\cite{Ryu2017TRO}, and has offered robust position tracking for time-delayed bilateral shared control of an aerial vehicle~\cite{LiuElSaddik2019}.

This paper introduces a novel dynamic interconnection and damping injection strategy for robust position tracking in time-delayed bilateral teleoperation based on a position-position structure. While force-reflection~\cite{Lawrence1993,Hashtrudi-Zaad2002,FrankenStramigiogli2011, Heck2018} and force-reproduction~\cite{NozakiOhnishi2014,Yokokura2014, Antonello2016, Nagatsu2017} architectures can improve transparency, force and acceleration measurements are unavailable or noisy for many commercial robots. Therefore, the proposed controllers employ only the position and velocity of the local robot and the delayed position of the remote robot. Rigorous Lyapunov stability analysis proves that they render the teleoperator input-to-state stable~(ISS).

The P+d~\cite{Nuno2008}, PSPM~\cite{Lee2010} and two-layer~\cite{FrankenStramigiogli2011,Franken2012} approaches are most closely related to the dynamic strategy introduced in this paper. Whereas the P+d and PSPM algorithms synchronize the master and slave exactly in the absence of user and environment forces but over-inject constant damping compared to the two-layer method, the dynamic strategy proposed in this paper offers a unique property for robust position tracking in time-delayed bilateral teleoperation: given full~(unlimited) actuation, it can confine the master-slave position error to a prescribed invariant set, and can drive it exponentially to a globally attractive set which includes the origin. More importantly, the control gains determine the Lebesgue measure of the invariant and globally attractive sets, and the rate of convergece to the latter. Therefore, sufficiently large control gains can tighten the master-slave coupling, and can indirectly convey the slave-environment interactions to the operator~\cite{Heck2018}. The unique property of the dynamic interconnection and damping injection strategy is advantageous for high-precision telemanipulation tasks with position tracking error constraints. 
%

\section{Preliminaries}\label{sec: preliminaries}

\subsection{System Dynamics}

Let the master and slave robots be $n$-degree-of-freedom~($n$-DOF) serial manipulators with revolute joints. Their joint space dynamics are:
\begin{equation}\label{equ1}
\begin{aligned}
\fbm{M}_{m}(\fbm{q}_{m})\cdot\ddotbm{q}_{m}+\fbm{C}_{m}(\fbm{q}_{m},\dotbm{q}_{m})\cdot\dotbm{q}_{m}=&\bm{\tau}_{m}+\bm{\tau}_{h}\textrm{,}\\
\fbm{M}_{s}(\fbm{q}_{s})\cdot\ddotbm{q}_{s}+\fbm{C}_{s}(\fbm{q}_{s},\dotbm{q}_{s})\cdot\dotbm{q}_{s}=&\bm{\tau}_{s}+\bm{\tau}_{e}\textrm{,}
\end{aligned}
\end{equation} 
where the subscript $i=m,s$ indexes master and slave quantities, $\ddotbm{q}_{i}$, $\dotbm{q}_{i}$ and $\fbm{q}_{i}$ are joint acceleration, velocity and position, $\fbm{M}_{i}(\fbm{q}_{i})$ and $\fbm{C}_{i}(\fbm{q}_{i},\dotbm{q}_{i})$ are matrices of inertia and of Coriolis and centrifugal effects, $\bm{\tau}_{i}$ are control torques, and $\bm{\tau}_{h}$ and $\bm{\tau}_{e}$ are user and environment torques.

The following properties of the dynamics~\eqref{equ1}, and assumptions on communication delays and on user and environment torques, facilitate later control design and stability analysis.
\begin{enumerate}[label=P.\arabic*]
\item \label{item:P1}
The inertia matrix $\fbm{M}_{i}(\fbm{q}_{i})$ is symmetric, positive definite and uniformly bounded by $\fbm{0}\prec\lambda_{i1}\fbm{I}\preceq \fbm{M}_{i}(\fbm{q}_{i})\preceq \lambda_{i2}\fbm{I}\prec \bm{\infty}$, with $\lambda_{i1}$ and $\lambda_{i2}$ positive constants.
\item \label{item:P2}
The matrix $\dot{\fbm{M}}_{i}(\fbm{q}_{i})-2\fbm{C}_{i}(\fbm{q}_{i},\dot{\fbm{q}}_{i})$ is skew-symmetric.
\item \label{item:P3}
There exists $c_{i}>0$ such that $\|\fbm{C}_{i}(\fbm{q}_{i},\fbm{x})\cdot\fbm{y}\|\leq c_{i}\cdot\|\fbm{x}\|\cdot \|\fbm{y}\|, \forall \fbm{q}_{i}, \fbm{x}, \fbm{y}$.
\end{enumerate} 
\begin{enumerate}[label=A.\arabic*]
\item \label{item:A1}
The time-varying communication delays from robot $i$ to robot $j$, $d_{i}$, are bounded, $0\leq d_{i}\leq\overline{d}_{i}$, for $i,j=m,s$.
\item \label{item:A2}
The joint torques due to operator $\bm{\tau}_{h}$ and environment $\bm{\tau}_{e}$ forces are bounded by $\|\bm{\tau}_{k}\|\leq\overline{\tau}_{k}$, $k=h,e$.
\end{enumerate}

\subsection{Input-to-State Stability}\label{sec: iss}

This section overviews the definitions and theorems required by the stability analysis in~\sect{sec: main result}.

A function $\alpha: \mathbb{R}_{\geq 0}\mapsto\mathbb{R}_{\geq 0}$ is of class $\mathcal{K}$ if it is continuous, strictly increasing and $\alpha(0)=0$; of class $\mathcal{K}_{\infty}$ if it is of class $\mathcal{K}$ and unbounded; of class $\mathcal{L}$ if it decreases to zero as its argumet tends to $+\infty$. A function $\beta: \mathbb{R}_{\geq 0}\times\mathbb{R}_{\geq 0}\mapsto\mathbb{R}_{\geq 0}$ is of class $\mathcal{KL}$ if it is of class $\mathcal{K}$ in its first argument and of class $\mathcal{L}$ in the second argument. Let $\mathcal{C}([-r,0];\mathbb{R}^{m})$ denote the set of the continuous functions defined on $[-r,0]$ and with values in $\mathbb{R}^{m}$. For any essentially bounded function $\bm{\phi}\in \mathcal{C}([-r,0];\mathbb{R}^{m})$, let $|\bm{\phi}|_{r}=\sup\limits_{-r\leq\tau\leq 0}\|\bm{\phi}(\tau)\|$ and $|\bm{\phi}|_{a}$ be a norm of $\bm{\phi}$ such that:
\begin{equation}\label{equ2}
\gamma_{a}\cdot \|\bm{\phi}(0)\|\leq |\bm{\phi}|_{a}\leq\overline{\gamma}_{a}\cdot |\bm{\phi}|_{r}
\end{equation}
for some positive reals $\gamma_{a}$ and $\overline{\gamma}_{a}$.

\begin{enumerate}[label=D.\arabic*]
\item \label{item:D1}~\cite{Sontag2008}
The nonlinear delay-free system
\begin{equation}\label{equ3}
\dotbm{x}(t)=f(\fbm{x}(t),\fbm{u}(t))
\end{equation}
is ISS with input $\fbm{u}(t)\in\mathbb{R}^{l}$ and state $\fbm{x}(t)\in\mathbb{R}^{m}$ if there exist functions $\alpha\in\mathcal{K}$ and $\beta\in\mathcal{KL}$ such that:
\begin{equation}\label{equ4}
\|\fbm{x}(t)\|\leq \beta\left(\|\fbm{x}(0)\|,t\right)+\alpha\left(\sup\limits_{0\leq\tau\leq t}\|\fbm{u}(\tau)\|\right)\textrm{,}\quad \forall t\geq 0\textrm{.}
\end{equation}
\item \label{item:D2}~\cite{Jiang2006}
The nonlinear time-delay system:
\begin{equation}\label{equ5}
\begin{aligned}
\dotbm{x}(t)=& f(\fbm{x}_{t},\fbm{u}(t))\textrm{,}\quad t\geq 0\ a.e.\textrm{,}\\
\fbm{x}(\tau)=&\bm{\xi}_{0}(\tau)\textrm{,}\quad \tau\in [-r,0]\textrm{,}
\end{aligned}
\end{equation}
with $\fbm{x}_{t}:[-r,0]\mapsto\mathbb{R}^{m}$ the standard function $\fbm{x}_{t}(\tau)=\fbm{x}(t+\tau)$, and $r$ the maximum involved delay, $f: \mathcal{C}([-r,0];\mathbb{R}^{m})\times\mathbb{R}^{l}\mapsto\mathbb{R}^{m}$, and $\bm{\xi}_{0}\in \mathcal{C}([-r,0];\mathbb{R}^{m})$, is ISS with input $\fbm{u}(t)\in\mathbb{R}^{l}$ and state $\fbm{x}(t)\in\mathbb{R}^{m}$ if there exist functions $\alpha\in\mathcal{K}$ and $\beta\in\mathcal{KL}$ such that:
\begin{equation}\label{equ6}
\|\fbm{x}(t)\|\leq\beta\left(|\bm{\xi}_{0}|_{r},t\right)+\alpha\left(\sup\limits_{0\leq\tau\leq t}\|\fbm{u}(\tau)\|\right)\textrm{,}\quad \forall t\geq 0\textrm{.}
\end{equation}
\end{enumerate}

Because the time-delay system~\eqref{equ5} is infinite dimensional~\cite{Gu2003}, the input-to-state stability of it is defined differently than that of the delay-free system~\eqref{equ3}.  Correspondingly, the following two theorems using ISS-Lyapunov functions and Lyapunov-Krasovskii functionals facilitate proving ISS teleoperation without delays and with time-varying delays, respectively.

\begin{enumerate}[label=T.\arabic*]
\item \label{item:T1}~\cite{Sontag2008}
The delay-free system~\eqref{equ3} is ISS if and only if there exist an ISS-Lyapunov function $V:\mathbb{R}^{m}\mapsto\mathbb{R}_{\geq 0}$, and functions $\alpha_{1},\alpha_{2}\in\mathcal{K}_{\infty}$, $\alpha_{3},\rho\in\mathcal{K}$ such that:
\begin{enumerate}
\item 
$\alpha_{1}(\|\fbm{x}\|)\leq V(\fbm{x})\leq \alpha_{2}(\|\fbm{x}\|)$, $\forall\fbm{x}\in\mathbb{R}^{m}$;
\item 
$\dot{V}(\fbm{x},\fbm{u})\leq -\alpha_{3}(\|\fbm{x}\|)$, $\forall \fbm{x}\in\mathbb{R}^{m}, \fbm{u}\in\mathbb{R}^{l}: \|\fbm{x}\|\geq\rho(\|\fbm{u}\|)$.
\end{enumerate}
\item \label{item:T2}~\cite{Jiang2006}
The time-delay system~\eqref{equ5} is ISS if there is a Lyapunov-Krasovskii functional $V:\mathcal{C}([-r,0];\mathbb{R}^{m})\mapsto\mathbb{R}_{\geq 0}$, functions $\alpha_{1},\alpha_{2}\in\mathcal{K}_{\infty}$, $\alpha_{3},\rho\in\mathcal{K}$ such that:
\begin{enumerate}
\item 
$\alpha_{1}(\|\fbm{x}\|)\leq V(\fbm{x}_{t})\leq \alpha_{2}(|\fbm{x}_{t}|_{a})$, $\forall\fbm{x}_{t}\in \mathcal{C}([-r,0];\mathbb{R}^{m})$;
\item 
$\dot{V}(\fbm{x}_{t},\fbm{u})\leq -\alpha_{3}(|\fbm{x}_{t}|_{a})$, $\forall \fbm{x}_{t}\in \mathcal{C}([-r,0];\mathbb{R}^{m}), \fbm{u}\in\mathbb{R}^{l}: |\fbm{x}_{t}|_{a}\geq\rho(\|\fbm{u}\|)$.
\end{enumerate}
\end{enumerate}

%

\section{Main Result}\label{sec: main result}

This section presents the control design and stability analysis for ISS bilateral teleoperation, considering communications both without, and with time-varying, delays.

\subsection{ISS Teleoperation Without Time Delays}\label{sec: no delay}

Define sliding surfaces for the master and slave robots by:
\begin{equation}\label{equ7}
\begin{aligned}
\fbm{s}_{m}=& \dotbm{q}_{m}+\sigma\cdot(\fbm{q}_{m}-\fbm{q}_{s})\textrm{,}\\
\fbm{s}_{s}=& \dotbm{q}_{s}+\sigma\cdot(\fbm{q}_{s}-\fbm{q}_{m})\textrm{,}
\end{aligned}
\end{equation}
where $\sigma>0$ is a constant to be determined. Then, the dynamics~\eqref{equ1} can be transformed to:
\begin{equation}\label{equ8}
\begin{aligned}
\fbm{M}_{m}(\fbm{q}_{m})\cdot\dotbm{s}_{m}+\fbm{C}_{m}(\fbm{q}_{m},\dotbm{q}_{m})\cdot\fbm{s}_{m}=&\bm{\tau}_{m}+\bm{\tau}_{h}+\sigma\cdot\fbm{\Delta}_{m}\textrm{,}\\
\fbm{M}_{s}(\fbm{q}_{s})\cdot\dotbm{s}_{s}+\fbm{C}_{s}(\fbm{q}_{s},\dotbm{q}_{s})\cdot\fbm{s}_{s}=&\bm{\tau}_{s}+\bm{\tau}_{e}+\sigma\cdot\fbm{\Delta}_{s}\textrm{,}
\end{aligned}
\end{equation}
where the state-dependent master and slave mismatches are: 
\begin{equation}\label{equ9}
\begin{aligned}
\fbm{\Delta}_{m}=&\fbm{M}_{m}(\fbm{q}_{m})\cdot(\dotbm{q}_{m}-\dotbm{q}_{s})+\fbm{C}_{m}(\fbm{q}_{m},\dotbm{q}_{m})\cdot(\fbm{q}_{m}-\fbm{q}_{s})\textrm{,}\\
\fbm{\Delta}_{s}=&\fbm{M}_{s}(\fbm{q}_{s})\cdot(\dotbm{q}_{s}-\dotbm{q}_{m})+\fbm{C}_{s}(\fbm{q}_{s},\dotbm{q}_{s})\cdot(\fbm{q}_{s}-\fbm{q}_{m})\textrm{.}
\end{aligned}
\end{equation}

The master and slave receive each other's position instantly, and their dynamic interconnection and damping injection controllers are:
\begin{equation}\label{equ10}
\begin{aligned}
\bm{\tau}_{m}=&-\fbm{K}_{m}(\dotbm{q}_{m})\cdot\fbm{s}_{m}-\fbm{P}\cdot(\fbm{q}_{m}-\fbm{q}_{s})-\fbm{D}_{m}\dotbm{q}_{m}\textrm{,}\\
\bm{\tau}_{s}=&-\fbm{K}_{s}(\dotbm{q}_{s})\cdot\fbm{s}_{s}-\fbm{P}\cdot(\fbm{q}_{s}-\fbm{q}_{m})-\fbm{D}_{s}\dotbm{q}_{s}\textrm{,}
\end{aligned}
\end{equation}
where $\fbm{K}_{i}(\dotbm{q}_{i})$, $\fbm{P}$ and $\fbm{D}_{i}$, $i=m,s$, are diagonal positive definite gain matrices to be designed. 

\begin{remark}\label{rem1}
\normalfont The velocity-dependent gains $\fbm{K}_{i}(\dotbm{q}_{i})$ are designed to suppress the state-dependent mismatch $\fbm{\Delta}_{i}$ in~\eqref{equ9}. Rewritting~\eqref{equ10} in the form: 
\begin{align*}
\bm{\tau}_{i}=-\Big[\fbm{P}+\sigma\cdot\fbm{K}_{i}(\dotbm{q}_{i})\Big]\cdot(\fbm{q}_{i}-\fbm{q}_{j})-\Big[\fbm{D}_{i}+\fbm{K}_{i}(\dotbm{q}_{i})\Big]\cdot\dotbm{q}_{i}\textrm{,}
\end{align*}
where $i,j=m,s$ and $i\neq j$, reveals that the controller dynamically modulates the master-slave interconnection and the local damping through $\fbm{K}_{i}(\dotbm{q}_{i})$. The transformed system dynamics~\eqref{equ8} are input-output passive with input $\bm{\tau}_{i}+\bm{\tau}_{k}+\sigma\cdot\fbm{\Delta}_{i}$ and output $\fbm{s}_{i}$, where $i=m,s$ and $k=h,e$, respectively. However, the teleoperator~\eqref{equ1} in closed-loop with the controller~\eqref{equ10} is not guaranteed stable because the modulated interconnection may be non-passive~\cite{Lee2010} and thus may render the teleoperator non-passive. The following rigorous analysis is required to show ISS teleoperation.
\end{remark}


The following Lyapunov candidate:
\begin{equation}\label{equ11}
\begin{aligned}
V=\frac{1}{2}\sum_{i=m,s}\tbm{s}_{i}\fbm{M}_{i}(\fbm{q}_{i})\cdot\fbm{s}_{i}+\frac{1}{2}\cdot(\fbm{q}_{m}-\fbm{q}_{s})^\mathsf{T}\fbm{P}\cdot(\fbm{q}_{m}-\fbm{q}_{s})\textrm{,}
\end{aligned}
\end{equation} 
serves to evaluate stability. After using property~\ref{item:P2}, the time derivative of $V$ along the closed-loop dynamics~\eqref{equ8} with the control~\eqref{equ10} can be evaluated as:
\begin{equation}\label{equ12}
\begin{aligned}
\dot{V}=&\tbm{s}_{m}\bm{\tau}_{h}+\dottbm{q}_{m}\fbm{P}\cdot(\fbm{q}_{m}-\fbm{q}_{s})-\tbm{s}_{m}\fbm{K}_{m}(\dotbm{q}_{m})\cdot\fbm{s}_{m}\\
&-\tbm{s}_{m}\fbm{P}\cdot(\fbm{q}_{m}-\fbm{q}_{s})-\tbm{s}_{m}\fbm{D}_{m}\dotbm{q}_{m}+\sigma\cdot\tbm{s}_{m}\fbm{\Delta}_{m}\\
&+\tbm{s}_{s}\bm{\tau}_{e}+\dottbm{q}_{s}\fbm{P}\cdot(\fbm{q}_{s}-\fbm{q}_{m})-\tbm{s}_{s}\fbm{K}_{s}(\dotbm{q}_{s})\cdot\fbm{s}_{s}\\
&-\tbm{s}_{s}\fbm{P}\cdot(\fbm{q}_{s}-\fbm{q}_{m})-\tbm{s}_{s}\fbm{D}_{s}\dotbm{q}_{s}+\sigma\cdot\tbm{s}_{s}\fbm{\Delta}_{s}\textrm{.}
\end{aligned}
\end{equation}

The definitions of $\fbm{\Delta}_{i}$ in~\eqref{equ9} together with properties~\ref{item:P1} and~\ref{item:P3} lead to the following inequalities for $i=m,s$:
\begin{equation}\label{equ13}
\begin{aligned}
\tbm{s}_{i}\fbm{\Delta}_{i}\leq&\Big|\tbm{s}_{i}\fbm{M}_{i}(\fbm{q}_{i})\cdot(\dotbm{q}_{m}-\dotbm{q}_{s})\Big|\\
&\quad \quad \quad \ \ +\Big|\tbm{s}_{i}\fbm{C}_{i}(\fbm{q}_{i},\dotbm{q}_{i})\cdot(\fbm{q}_{m}-\fbm{q}_{s})\Big|\\
\leq &\lambda_{i2}\cdot\left(\tbm{s}_{i}\fbm{s}_{i}+\frac{1}{2}\dottbm{q}_{m}\dotbm{q}_{m}+\frac{1}{2}\dottbm{q}_{s}\dotbm{q}_{s}\right)\\
&+c_{i}\cdot\left[\|\dotbm{q}_{i}\|^{2}\cdot\tbm{s}_{i}\fbm{s}_{i}+\frac{1}{4}\cdot(\fbm{q}_{m}-\fbm{q}_{s})^\mathsf{T}(\fbm{q}_{m}-\fbm{q}_{s})\right]\textrm{.}
\end{aligned}
\end{equation} 

\begin{remark}\label{rem2}
\normalfont The inequalities~\eqref{equ13} show that the impact of master and slave mismatches can be upper-bounded using the velocities of the two robots and their position error. Thus, they indicate the demand for a dynamic interconnection and damping injection strategy. More specifically, as will be illustrated in~\eqref{equ19}, the terms $\|\dotbm{q}_{i}\|^{2}\cdot\tbm{s}_{i}\fbm{s}_{i}$ in inequalities~\eqref{equ13} can be dominated by selecting velocity-dependent gains $\fbm{K}_{i}(\dotbm{q}_{i})$ in the controls $\bm{\tau}_{i}$. Alternatively, the terms $\Big|\tbm{s}_{i}\fbm{C}_{i}(\fbm{q}_{i},\dotbm{q}_{i})\cdot(\fbm{q}_{m}-\fbm{q}_{s})\Big|$ can be bounded by $c_{i}\cdot\left[\|\fbm{q}_{m}-\fbm{q}_{s}\|^{2}\cdot\tbm{s}_{i}\fbm{s}_{i}+\frac{1}{4}\cdot\dottbm{q}_{i}\dotbm{q}_{i}\right]$, and the gains $\fbm{K}_{i}$ can be updated based on the master-slave position error. Therefore, both the master and slave velocities and their position error can be used to dynamically modulate their coupling and local damping injection, to achieve ISS bilateral teleoperation.
\end{remark}

The definitions of the sliding surfaces $\fbm{s}_{i}$, $i=m,s$ lead to:
\begin{equation}\label{equ14}
\begin{aligned}
&-\tbm{s}_{m}\fbm{P}\cdot(\fbm{q}_{m}-\fbm{q}_{s})-\tbm{s}_{m}\fbm{D}_{m}\dotbm{q}_{m}+\dottbm{q}_{m}\fbm{P}\cdot(\fbm{q}_{m}-\fbm{q}_{s})\\
=&-\sigma\cdot(\fbm{q}_{m}-\fbm{q}_{s})^\mathsf{T}\fbm{P}\cdot(\fbm{q}_{m}-\fbm{q}_{s})-\dottbm{q}_{m}\fbm{D}_{m}\dotbm{q}_{m}\\
&-\sigma\cdot(\fbm{q}_{m}-\fbm{q}_{s})^\mathsf{T}\fbm{D}_{m}\dotbm{q}_{m}\\
\leq &-\sigma\cdot(\fbm{q}_{m}-\fbm{q}_{s})^\mathsf{T}\left(\fbm{P}-\frac{\mu_{m}}{4}\cdot\fbm{D}_{m}\right)\cdot(\fbm{q}_{m}-\fbm{q}_{s})\\
&-\left(1-\frac{\sigma}{\mu_{m}}\right)\cdot\dottbm{q}_{m}\fbm{D}_{m}\dotbm{q}_{m}\textrm{,}
\end{aligned}
\end{equation}
where $\mu_{m}>0$, and, similarly, to:
\begin{equation}\label{equ15}
\begin{aligned}
&-\tbm{s}_{s}\fbm{P}\cdot(\fbm{q}_{s}-\fbm{q}_{m})-\tbm{s}_{s}\fbm{D}_{s}\dotbm{q}_{s}+\dottbm{q}_{s}\fbm{P}\cdot(\fbm{q}_{s}-\fbm{q}_{m})\\
\leq &-\sigma\cdot(\fbm{q}_{s}-\fbm{q}_{m})^\mathsf{T}\left(\fbm{P}-\frac{\mu_{s}}{4}\cdot\fbm{D}_{s}\right)\cdot(\fbm{q}_{s}-\fbm{q}_{m})\\
&-\left(1-\frac{\sigma}{\mu_{s}}\right)\cdot\dottbm{q}_{s}\fbm{D}_{s}\dotbm{q}_{s}\textrm{,}
\end{aligned}
\end{equation}
where $\mu_{s}>0$. Further algebraic manipulations yield:
\begin{equation}\label{equ16}
\tbm{s}_{m}\bm{\tau}_{h}+\tbm{s}_{s}\bm{\tau}_{e}\leq\sum_{i=m,s}\omega_{i}\cdot\tbm{s}_{i}\fbm{s}_{i}+\frac{1}{4}\cdot\left(\frac{\|\bm{\tau}_{h}\|^{2}}{\omega_{m}}+\frac{\|\bm{\tau}_{e}\|^{2}}{\omega_{s}}\right)\textrm{,}
\end{equation}
where $\omega_{i}>0$, $i=m,s$.

After substitutions from~\eqs{equ13}{equ16}, the derivative of the Lyapunov candidate~\eqref{equ12} can then be bounded by:
\begin{equation}\label{equ17}
\begin{aligned}
\dot{V}\leq &-\sum_{i=m,s}\Big(\tbm{s}_{i}\overline{\fbm{K}}_{i}(\dotbm{q}_{i})\cdot\fbm{s}_{i}+\dottbm{q}_{i}\overline{\fbm{D}}_{i}\dotbm{q}_{i}\Big)\\
&-(\fbm{q}_{m}-\fbm{q}_{s})^\mathsf{T}\overline{\fbm{P}}\cdot(\fbm{q}_{m}-\fbm{q}_{s})+\frac{1}{4}\cdot\left(\frac{\|\bm{\tau}_{h}\|^{2}}{\omega_{m}}+\frac{\|\bm{\tau}_{e}\|^{2}}{\omega_{s}}\right)\textrm{,}
\end{aligned}
\end{equation}
where:  
\begin{equation}\label{equ18}
\begin{aligned}
\overline{\fbm{K}}_{i}(\dotbm{q}_{i})=&\fbm{K}_{i}(\dotbm{q}_{i})-\Big(\omega_{i}+\sigma\cdot\lambda_{i2}+\sigma\cdot c_{i}\cdot\|\dotbm{q}_{i}\|^{2}\Big)\cdot \fbm{I}\textrm{,}\\
\overline{\fbm{D}}_{i}=&\left(1-\frac{\sigma}{\mu_{i}}\right)\cdot\fbm{D}_{i}-\frac{\sigma}{2}\cdot\big(\lambda_{m2}+\lambda_{s2}\big)\cdot\fbm{I}\textrm{,}\\
\overline{\fbm{P}}=&2\sigma\cdot\fbm{P}-\frac{\sigma}{4}\cdot\sum_{i=m,s}\big(\mu_{i}\cdot\fbm{D}_{i}+c_{i}\cdot\fbm{I}\big)\textrm{.}
\end{aligned}
\end{equation}

\begin{theorem}\label{theorem1}
The teleoperator~\eqref{equ1} with the control~\eqref{equ10} is ISS with input $\fbm{u}=[\tfbm{\tau}_{h}\ \tfbm{\tau}_{e}]^\mathsf{T}$ and state $\fbm{x}=[\dottbm{q}_{m}\ \dottbm{q}_{s}\ (\fbm{q}_{m}-\fbm{q}_{s})^\mathsf{T}]^\mathsf{T}$ if the control gains $\fbm{K}_{i}(\dotbm{q}_{i})$, $\fbm{D}_{i}$, $\fbm{P}$, $\sigma$ and positive parameters $\mu_{i}$, $\omega_{i}$, $\kappa$, $i=m,s$, satisfy:
\begin{equation}\label{equ19}
\begin{aligned}
\overline{\fbm{P}}\succeq \frac{\kappa}{2}\cdot\fbm{P}\textrm{,}\quad \overline{\fbm{K}}_{i}(\dotbm{q}_{i})\succeq \frac{\kappa}{2}\cdot\lambda_{i2}\cdot\fbm{I}\quad \text{and}\quad \overline{\fbm{D}}_{i}\succeq\fbm{0}\textrm{.}
\end{aligned}
\end{equation}
\end{theorem}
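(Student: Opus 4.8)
The plan is to certify that the Lyapunov candidate $V$ in~\eqref{equ11} is an ISS-Lyapunov function in the sense of characterization~\ref{item:T1}. Since this subsection is delay-free, only~\ref{item:T1} is relevant, so it suffices to verify its two clauses for the state $\fbm{x}=[\dottbm{q}_m\ \dottbm{q}_s\ (\fbm{q}_m-\fbm{q}_s)^\mathsf{T}]^\mathsf{T}$ and the input $\fbm{u}=[\tfbm{\tau}_h\ \tfbm{\tau}_e]^\mathsf{T}$: a quadratic sandwich bound on $V$, and a dissipation inequality for $\dot{V}$ outside a ball whose radius scales with $\|\fbm{u}\|$.

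First I would establish $\alpha_1(\|\fbm{x}\|)\le V\le\alpha_2(\|\fbm{x}\|)$ with $\alpha_1,\alpha_2\in\mathcal{K}_\infty$. The subtlety is that $V$ is written through the sliding variables $\fbm{s}_i$, whereas $\fbm{x}$ carries the velocities $\dotbm{q}_i$ directly. From the definitions~\eqref{equ7}, the map $(\dotbm{q}_m,\dotbm{q}_s,\fbm{q}_m-\fbm{q}_s)\mapsto(\fbm{s}_m,\fbm{s}_s,\fbm{q}_m-\fbm{q}_s)$ is linear with a block-triangular constant matrix (depending only on $\sigma$) whose diagonal blocks are identities; it is therefore a bounded bijection with bounded inverse, yielding constants $c_1,c_2>0$ with $c_1\|\fbm{x}\|^2\le\|\fbm{s}_m\|^2+\|\fbm{s}_s\|^2+\|\fbm{q}_m-\fbm{q}_s\|^2\le c_2\|\fbm{x}\|^2$. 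Combining this with the uniform inertia bounds $\lambda_{i1}\fbm{I}\preceq\fbm{M}_i\preceq\lambda_{i2}\fbm{I}$ of~\ref{item:P1} and the positive definiteness of $\fbm{P}$ delivers the quadratic $\alpha_1,\alpha_2$.

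Next I would turn the derivative estimate~\eqref{equ17} into a dissipation inequality by invoking the gain conditions~\eqref{equ19}: the clause $\overline{\fbm{D}}_i\succeq\fbm{0}$ lets the velocity-damping terms be dropped as nonnegative, while $\overline{\fbm{K}}_i(\dotbm{q}_i)\succeq\frac{\kappa}{2}\lambda_{i2}\fbm{I}$ and $\overline{\fbm{P}}\succeq\frac{\kappa}{2}\fbm{P}$ give
\begin{equation*}
\dot{V}\le-\frac{\kappa}{2}\Big[\sum_{i=m,s}\lambda_{i2}\|\fbm{s}_i\|^2+(\fbm{q}_m-\fbm{q}_s)^\mathsf{T}\fbm{P}(\fbm{q}_m-\fbm{q}_s)\Big]+\frac{1}{4}\Big(\frac{\|\bm{\tau}_h\|^2}{\omega_m}+\frac{\|\bm{\tau}_e\|^2}{\omega_s}\Big).
\end{equation*}
Since $\tbm{s}_i\fbm{M}_i\fbm{s}_i\le\lambda_{i2}\|\fbm{s}_i\|^2$ by~\ref{item:P1}, the bracket is no smaller than $2V$, so the estimate collapses to the exponential form $\dot{V}\le-\kappa V+\tfrac{1}{4\omega}\|\fbm{u}\|^2$ with $\omega=\min(\omega_m,\omega_s)$, using $\|\fbm{u}\|^2=\|\bm{\tau}_h\|^2+\|\bm{\tau}_e\|^2$.

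Finally I would extract the functions required by~\ref{item:T1}. Splitting $-\kappa V=-\kappa(1-\theta)V-\kappa\theta V$ for a fixed $\theta\in(0,1)$ and absorbing the input term whenever $\kappa\theta V\ge\tfrac{1}{4\omega}\|\fbm{u}\|^2$ yields $\dot{V}\le-\kappa(1-\theta)V\le-\kappa(1-\theta)\alpha_1(\|\fbm{x}\|)=:-\alpha_3(\|\fbm{x}\|)$; the triggering condition is implied, through $\alpha_1(\|\fbm{x}\|)\le V$, by $\|\fbm{x}\|\ge\rho(\|\fbm{u}\|)$ for a linear (hence class-$\mathcal{K}$) $\rho$. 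With $\alpha_1,\alpha_2\in\mathcal{K}_\infty$, $\alpha_3\in\mathcal{K}$ and $\rho\in\mathcal{K}$ both clauses of~\ref{item:T1} hold, so the teleoperator is ISS. I expect the only genuinely delicate step to be the sandwich bound: certifying the quadratic lower bound on $V$ in the full state $\fbm{x}$ despite $V$ being expressed in sliding coordinates, which is what forces the explicit change-of-variables argument together with the lower inertia bound $\lambda_{i1}$; everything after that is the standard ISS-Lyapunov mechanism.
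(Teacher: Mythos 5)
Your proposal is correct and follows essentially the same route as the paper: it verifies the two clauses of characterization~\ref{item:T1} by (i) sandwiching $V$ between quadratic class-$\mathcal{K}_\infty$ bounds in $\|\fbm{x}\|$ via the invertible linear relation between $(\dotbm{q}_m,\dotbm{q}_s,\fbm{q}_m-\fbm{q}_s)$ and the sliding coordinates (the paper does this through the explicit inequalities~\eqref{equ20}--\eqref{equ22}), and (ii) collapsing~\eqref{equ17} under~\eqref{equ19} to $\dot{V}\le-\kappa V+\tfrac{1}{4\omega}\|\fbm{u}\|^2$ and reading off $\alpha_3$ and $\rho$, exactly as in~\eqref{equ23}--\eqref{equ24}. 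Your only additions are to spell out why the gain condition with the factor $\tfrac{\kappa}{2}\lambda_{i2}$ dominates $\kappa V$ and to make the $\theta$-splitting explicit, both of which the paper leaves implicit.
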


\begin{proof}

The definitions of the sliding surfaces $\fbm{s}_{i}$ imply that:
\begin{equation}\label{equ20}
\begin{aligned}
\|\dotbm{q}_{i}\|^{2}\leq & 2\cdot\|\fbm{s}_{i}\|^{2}+2\sigma^{2}\cdot\|\fbm{q}_{m}-\fbm{q}_{s}\|^{2}\textrm{,}\\
\|\fbm{s}_{i}\|^{2}\leq & 2\cdot\|\dotbm{q}_{i}\|^{2}+2\sigma^{2}\cdot\|\fbm{q}_{m}-\fbm{q}_{s}\|^{2}\textrm{.}
\end{aligned}
\end{equation}
Then, Property~\ref{item:P1} and Equations~\eqref{equ11} and~\eqref{equ20} lead to:
\begin{equation}\label{equ21}
\begin{aligned}
V\geq \sum_{i=m,s}\frac{\lambda_{i1}}{2}\cdot\|\fbm{s}_{i}\|^{2}+\frac{p}{2}\cdot\|\fbm{q}_{m}-\fbm{q}_{s}\|^{2}\geq \alpha_{1}(\|\fbm{x}\|)\textrm{,}
\end{aligned}
\end{equation}
for $p$ the minimum eigenvalue of $\fbm{P}$, and $\alpha_{1}(\|\fbm{x}\|)=a_{1}\cdot\|\fbm{x}\|^{2}$ with:
\begin{align*}
\frac{1}{a_{1}}=\frac{4}{\lambda_{m1}}+\frac{4}{\lambda_{s1}}+\frac{8\sigma^{2}+2}{p}\textrm{,}
\end{align*}
and also to
\begin{equation}\label{equ22}
\begin{aligned}
V\leq &\sum_{i=m,s}\frac{\lambda_{i2}}{2}\cdot\|\fbm{s}_{i}\|^{2}+\frac{P}{2}\cdot\|\fbm{q}_{m}-\fbm{q}_{s}\|^{2}\leq \alpha_{2}(\|\fbm{x}\|)\textrm{,}
\end{aligned}
\end{equation}
for $P$ the maximum eigenvalue of $\fbm{P}$, and $\alpha_{2}(\|\fbm{x}\|)=a_{2}\cdot\|\fbm{x}\|^{2}$ with:
\begin{align*}
a_{2}=\max\left(\lambda_{m2},\lambda_{s2},\frac{P+2(\lambda_{m2}+\lambda_{s2})\cdot\sigma^{2}}{2}\right)\textrm{.}
\end{align*}
Because functions $\alpha_{1}$ and $\alpha_{2}$ are of class $\mathcal{K}_{\infty}$, $V$ satisfies condition a) of Theorem~\ref{item:T1}.

If condition~\eqref{equ19} is satisfied, $\dot{V}$ in~\eqref{equ17} can be further upper-bounded by:
\begin{equation}\label{equ23}
\dot{V}\leq -\kappa\cdot V+\frac{1}{4\omega}\cdot\|\fbm{u}\|^{2}\textrm{,}
\end{equation}
where $\omega=\min(\omega_{m},\omega_{s})$. Then, choosing class $\mathcal{K}$ functions:
\begin{equation}\label{equ24}
\alpha_{3}(\|\fbm{x}\|)= \frac{a_{1}\kappa}{2}\cdot \|\fbm{x}\|^{2}\textrm{,}\ \text{and}\ \rho(\|\fbm{u}\|) =  \sqrt{\frac{1}{2a_{1}\kappa\omega}}\cdot\|\fbm{u}\|\textrm{,} 
\end{equation}
ensures that $V$ in~\eqref{equ11} also satisfies condition b) of Theorem~\ref{item:T1} and thus, that the teleoperator is ISS.
\end{proof}

\begin{corollary}\label{corollary1}
ISS teleoperation under the control~\eqref{equ10} renders invariant the set:
\begin{equation}\label{equ25}
\mathcal{S}_{I}=\left\{\fbm{q}_{m}-\fbm{q}_{s}:\ \|\fbm{q}_{m}-\fbm{q}_{s}\|^{2}\leq \frac{2}{p}\cdot\left(V_{0}+\frac{\overline{\tau}^{2}}{4\kappa\omega}\right)\right\}\textrm{,}
\end{equation}
and globally exponentially attractive the set:
\begin{equation}\label{equ26}
\mathcal{S}_{A}=\left\{\fbm{q}_{m}-\fbm{q}_{s}:\ \|\fbm{q}_{m}-\fbm{q}_{s}\|^{2}\leq \frac{\overline{\tau}^{2}}{2p\kappa\omega}\right\}\textrm{,}
\end{equation}
where $\overline{\tau}^{2}=\bar{\tau}^{2}_{h}+\bar{\tau}^{2}_{e}$.
\end{corollary}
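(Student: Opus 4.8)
The plan is to integrate the scalar differential inequality \eqref{equ23} already established in the proof of \theo{theorem1}, and then to convert the resulting bound on $V$ into a bound on the position error $\fbm{q}_{m}-\fbm{q}_{s}$ using the quadratic lower bound furnished by the potential term of \eqref{equ11}. First I would invoke \ass{item:A2} to bound the exogenous input: since $\fbm{u}=[\tfbm{\tau}_{h}\ \tfbm{\tau}_{e}]^\mathsf{T}$, we have $\|\fbm{u}\|^{2}=\|\bm{\tau}_{h}\|^{2}+\|\bm{\tau}_{e}\|^{2}\leq\overline{\tau}_{h}^{2}+\overline{\tau}_{e}^{2}=\overline{\tau}^{2}$, so that \eqref{equ23} collapses to the autonomous inequality $\dot{V}\leq-\kappa\cdot V+\overline{\tau}^{2}/(4\omega)$. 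Integrating this first-order linear inequality (comparison lemma / integrating factor $e^{\kappa t}$) yields, with $V_{0}=V(0)$,
\[
V(t)\leq V_{0}\cdot e^{-\kappa t}+\frac{\overline{\tau}^{2}}{4\kappa\omega}\cdot\big(1-e^{-\kappa t}\big)\textrm{,}\quad\forall t\geq 0\textrm{.}
\]

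Next I would extract the position error. Because $\fbm{M}_{i}(\fbm{q}_{i})\succeq\fbm{0}$ by~\ref{item:P1} and $\fbm{P}\succeq p\cdot\fbm{I}$ with $p$ its minimum eigenvalue, discarding the nonnegative sliding-surface terms in \eqref{equ11} gives $V\geq (p/2)\cdot\|\fbm{q}_{m}-\fbm{q}_{s}\|^{2}$, equivalently $\|\fbm{q}_{m}-\fbm{q}_{s}\|^{2}\leq (2/p)\cdot V$. For the invariant set \eqref{equ25}, note that the right-hand side of the integrated bound is a convex combination of $V_{0}$ and $\overline{\tau}^{2}/(4\kappa\omega)$, hence bounded by their sum $V_{0}+\overline{\tau}^{2}/(4\kappa\omega)$ for every $t\geq 0$; chaining this with the lower bound produces exactly the radius appearing in $\mathcal{S}_{I}$, so a position error initialized in $\mathcal{S}_{I}$ never leaves it. For the globally attractive set \eqref{equ26}, letting $t\to\infty$ drives $e^{-\kappa t}\to 0$, whence $\limsup_{t\to\infty}V(t)\leq\overline{\tau}^{2}/(4\kappa\omega)$ and therefore $\|\fbm{q}_{m}-\fbm{q}_{s}\|^{2}$ is ultimately bounded by the radius of $\mathcal{S}_{A}$; the transient excess $V(t)-\overline{\tau}^{2}/(4\kappa\omega)$ decays like $e^{-\kappa t}$, which is precisely the claimed exponential attractivity at rate $\kappa$.

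Since both conclusions flow directly from integrating \eqref{equ23}, I do not expect a genuine obstacle. The only point demanding care is the invariance argument: one must observe that the convex-combination bound $V(t)\leq\max\{V_{0},\,\overline{\tau}^{2}/(4\kappa\omega)\}$ (and a fortiori the sum used in \eqref{equ25}) holds uniformly in $t$, not merely in the limit, so that genuine confinement of the error to $\mathcal{S}_{I}$ is guaranteed rather than only asymptotic attraction to $\mathcal{S}_{A}$.
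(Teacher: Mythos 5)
Your proposal is correct and follows essentially the same route as the paper: integrate the inequality \eqref{equ23} (the paper bounds $\|\fbm{u}(\theta)\|^{2}$ by $\overline{\tau}^{2}$ inside the convolution integral, you bound it before integrating --- the resulting estimates coincide), then discard the nonnegative kinetic terms in \eqref{equ11}/\eqref{equ21} to get $V\geq (p/2)\cdot\|\fbm{q}_{m}-\fbm{q}_{s}\|^{2}$ and read off both sets. Your explicit remark that the uniform-in-$t$ bound, not just the limit, is what yields invariance of $\mathcal{S}_{I}$ is a point the paper leaves implicit.
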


\begin{proof}
Time integration of $\dot{V}$ in~\eqref{equ23} yields:
\begin{equation}\label{equ27}
\begin{aligned}
V(t)= & e^{-\kappa t}\cdot V(0)+\frac{1}{4\omega}\int^{t}_{0}e^{-\kappa (t-\theta)}\cdot\|\fbm{u}(\theta)\|^{2}d\theta\\
\leq & e^{-\kappa t}\cdot V(0)+\frac{\overline{\tau}^{2}}{4\omega}\int^{t}_{0}e^{-\kappa (t-\theta)}d\theta\\
\leq & e^{-\kappa t}\cdot V(0)+\frac{\overline{\tau}^{2}}{4\kappa\omega}\textrm{,}
\end{aligned}
\end{equation}
which, together with~\eqref{equ21}, complete the proof.
\end{proof}

\subsection{ISS Teleoperation With Time-Varying Delays}\label{sec: delay}

For each robot, construct the following auxiliary system~(proxy):
\begin{equation}\label{equ28}
\begin{aligned}
\hbm{M}_{i}\ddothbm{q}_{i}=&-\hbm{K}_{i}\cdot\left[\dothatbm{q}_{i}+\hat{\sigma}\cdot\left(\fbm{P}_{i}\cdot(\hbm{q}_{i}-\fbm{q}_{i})+\hbm{P}\cdot(\hbm{q}_{i}-\hbm{q}_{jd})\right)\right]\\
&-\hbm{D}_{i}\dothatbm{q}_{i}-\fbm{P}_{i}\cdot(\hbm{q}_{i}-\fbm{q}_{i})-\hbm{P}\cdot(\hbm{q}_{i}-\hbm{q}_{jd})\textrm{,}
\end{aligned}
\end{equation} 
where: $i,j=m,s$ and $i\neq j$; $\hat{\sigma}>0$; $\hbm{M}_{i}$, $\hbm{K}_{i}$, $\hbm{D}_{i}$, $\fbm{P}_{i}$ and $\hbm{P}$ are diagonal positive definite matrices to be determined; and $\hbm{q}_{jd}=\hbm{q}_{j}(t-d_{j})$ is the output of the auxiliary system of robot $j$ received with a delay $d_{j}$ by robot $i$. Let the sliding surface of the proxy of robot $i$ be:
\begin{equation}\label{equ29}
\hbm{s}_{i}=\dothatbm{q}_{i}+\hat{\sigma}\cdot\hbm{e}_{i}\textrm{,}
\end{equation}
with:
\begin{equation}\label{equ30}
\hbm{e}_{i}=\fbm{P}_{i}\cdot\left(\hbm{q}_{i}-\fbm{q}_{i}\right)+\hbm{P}\cdot\left(\hbm{q}_{i}-\hbm{q}_{j}\right)\textrm{.}
\end{equation}
Then, the auxiliary dynamics can be rearranged in the form:
\begin{equation}\label{equ31}
\begin{aligned}
\hbm{M}_{i}\dothatbm{s}_{i}=&\hat{\sigma}\cdot\hbm{M}_{i}\left[\fbm{P}_{i}\cdot\left(\dothatbm{q}_{i}-\dotbm{q}_{i}\right)+\hbm{P}\cdot\left(\dothatbm{q}_{i}-\dothatbm{q}_{j}\right)\right]-\hbm{e}_{i}\\
&-\hbm{K}_{i}\hbm{s}_{i}-\left(\hat{\sigma}\cdot\hbm{K}_{i}+\fbm{I}\right)\cdot\hbm{P}\cdot\left(\hbm{q}_{j}-\hbm{q}_{jd}\right)-\hbm{D}_{i}\dothatbm{q}_{i}\textrm{.}
\end{aligned}
\end{equation}

Let the sliding surface of robot $i$ be:
\begin{equation}\label{equ32}
\fbm{s}_{i}=\dotbm{q}_{i}+\sigma\cdot\left(\fbm{q}_{i}-\hbm{q}_{i}\right)\textrm{,}
\end{equation}
with $\sigma>0$. As in~\sect{sec: no delay}, the master and slave dynamics can be transformed into~\eqref{equ8} but with mismatch:
\begin{equation}\label{equ33}
\begin{aligned}
\fbm{\Delta}_{i}=&\fbm{M}_{i}(\fbm{q}_{i})\cdot\left(\dotbm{q}_{i}-\dothatbm{q}_{i}\right)+\fbm{C}_{i}(\fbm{q}_{i},\dotbm{q}_{i})\cdot\left(\fbm{q}_{i}-\hbm{q}_{i}\right)\textrm{.}
\end{aligned}
\end{equation}
Correspondingly, the master and slave controllers are designed by:
\begin{equation}\label{equ34}
\begin{aligned}
\bm{\tau}_{i}=-\fbm{K}_{i}(\dotbm{q}_{i})\cdot\fbm{s}_{i}-\fbm{P}_{i}\cdot(\fbm{q}_{i}-\hbm{q}_{i})-\fbm{D}_{i}\dotbm{q}_{i}\textrm{,}
\end{aligned}
\end{equation}
with $\fbm{K}_{i}(\dotbm{q}_{i})$ and $\fbm{D}_{i}$ diagonal positive definite gain matrices to be determined.


\begin{remark}\label{rem3}
\normalfont The auxiliary systems~\eqref{equ28} have inertia $\hbm{M}_{i}$, are connected to each other through static Proportional control, and are driven by the master and slave through static interconnection and damping control. In contrast, the master and slave are connected to their proxies by the dynamic interconnection and damping injection controls~\eqref{equ34}, which are updated according to each robot velocity $\dotbm{q}_{i}$. Thus, the delays distort only information transmitted between statically coupled proxies and classical damping injection~\cite{Nuno2008} can overcome their destabilizing effect. The state-dependent mismatches $\bm{\Delta}_{i}$ in~\eqref{equ33} affect only the master and slave, as in the case of non-delayed communications in~\sect{sec: no delay}. The dynamic control strategy~\eqref{equ34} will be designed in~\eqref{equ37} to address these mismatches. 
\end{remark}

Stability is validated using the Lyapunov candidate $V=V_{1}+V_{2}$ with:
\begin{equation}\label{equ35}
\begin{aligned}
V_{1}=&\frac{1}{2}\sum_{i=m,s}\left[\tbm{s}_{i}\fbm{M}_{i}(\fbm{q}_{i})\cdot\fbm{s}_{i}+\left(\fbm{q}_{i}-\hbm{q}_{i}\right)^\mathsf{T}\fbm{P}_{i}\cdot\left(\fbm{q}_{i}-\hbm{q}_{i}\right)\right]\\
&+\frac{1}{2}\sum_{i=m,s}\thbm{s}_{i}\hbm{M}_{i}\hbm{s}_{i}+\frac{1}{2}\left(\hbm{q}_{m}-\hbm{q}_{s}\right)^\mathsf{T}\hbm{P}\cdot\left(\hbm{q}_{m}-\hbm{q}_{s}\right)\textrm{,}\\
V_{2}=&\sum_{i=m,s}\int^{0}_{-\overline{d}_{i}}\int^{t}_{t+\theta}e^{-\gamma(t-\xi)}\cdot\dothattbm{q}_{i}(\xi)\fbm{Q}_{i}\dothatbm{q}_{i}(\xi)d\xi d\theta\textrm{,}
\end{aligned}
\end{equation}
where $\fbm{Q}_{i}\succ\fbm{0}$, $i=m,s$, and the following lemma~\cite{Hua2010}:
\begin{enumerate}[label=L.\arabic*]
\item \label{item:L1}~\cite{Hua2010}
For any positive definite matrix $\bm{\Upsilon}$ and vectors $\bm{a}(t)$ and $\bm{b}(\xi)$ with appropriate dimensions, the following inequality holds:
\begin{align*}
\pm 2\tbm{a}(t)\int^{t}_{t-d(t)}\fbm{b}(\xi)d\xi-\int^{t}_{t-d(t)}\tbm{b}(\xi)\bm{\Upsilon}\fbm{b}(\xi)d\xi\\
\leq \overline{d}\cdot\tbm{a}(t)\bm{\Upsilon}^{-1}\fbm{a}(t)\textrm{,}\quad \forall \fbm{a}(t), \fbm{b}(\xi), 0\leq d(t)\leq \overline{d}\textrm{.}
\end{align*}
\end{enumerate}

Using property~\ref{item:P2}, the time derivative of $V_{1}$ along the transformed master and slave dynamics~\eqref{equ8} and their auxiliary dynamics~\eqref{equ31} is:
\begin{equation}\label{equ36}
\begin{aligned}
\dot{V}_{1}=&\sum_{i=m,s}\bigg[\sigma\cdot\tbm{s}_{i}\fbm{\Delta}_{i}-\tbm{s}_{i}\fbm{K}_{i}(\dotbm{q}_{i})\cdot\fbm{s}_{i}-\tbm{s}_{i}\fbm{P}_{i}\cdot(\fbm{q}_{i}-\hbm{q}_{i})\\
&\quad \quad \quad -\tbm{s}_{i}\fbm{D}_{i}\dotbm{q}_{i}-\thbm{s}_{i}\hbm{K}_{i}\hbm{s}_{i}-\thbm{s}_{i}\hbm{e}_{i}-\thbm{s}_{i}\hbm{D}_{i}\dothatbm{q}_{i}\\
&\quad \quad \quad +\hat{\sigma}\cdot\thbm{s}_{i}\fbm{P}_{i}\cdot\left(\dothatbm{q}_{i}-\dotbm{q}_{i}\right)\bigg]+\tbm{s}_{m}\bm{\tau}_{h}+\tbm{s}_{s}\bm{\tau}_{e}\\
&-\thbm{s}_{m}\left(\hat{\sigma}\cdot\hbm{K}_{m}+\fbm{I}\right)\cdot\hbm{P}\cdot(\hbm{q}_{s}-\hbm{q}_{sd})\\
&-\thbm{s}_{s}\left(\hat{\sigma}\cdot\hbm{K}_{s}+\fbm{I}\right)\cdot\hbm{P}\cdot(\hbm{q}_{m}-\hbm{q}_{md})\\
&+\hat{\sigma}\cdot\thbm{s}_{m}\hbm{P}\cdot\left(\dothatbm{q}_{m}-\dothatbm{q}_{s}\right)+\hat{\sigma}\cdot\thbm{s}_{s}\hbm{P}\cdot\left(\dothatbm{q}_{s}-\dothatbm{q}_{m}\right)\\
&+\sum_{i=m,s}\left[\dottbm{q}_{i}\fbm{P}_{i}\cdot(\fbm{q}_{i}-\hbm{q}_{i})+\dothattbm{q}_{i}\fbm{P}_{i}\cdot(\hbm{q}_{i}-\fbm{q}_{i})\right]\\
&+\dothattbm{q}_{m}\hbm{P}\cdot(\hbm{q}_{m}-\hbm{q}_{s})+\dothattbm{q}_{s}\hbm{P}\cdot(\hbm{q}_{s}-\hbm{q}_{m})\textrm{.}
\end{aligned}
\end{equation}

Further, the definition of mismatches $\fbm{\Delta}_{i}$ in~\eqref{equ33} leads to: 
\begin{equation}\label{equ37}
\begin{aligned}
\tbm{s}_{i}\fbm{\Delta}_{i}\leq &\left|\tbm{s}_{i}\fbm{M}_{i}(\fbm{q}_{i})\cdot\left(\dotbm{q}_{i}-\dothatbm{q}_{i}\right)\right|\\
&+\left|\tbm{s}_{i}\fbm{C}_{i}(\fbm{q}_{i},\dotbm{q}_{i})\cdot\left(\fbm{q}_{i}-\hbm{q}_{i}\right)\right|\\
\leq &\lambda_{i2}\cdot\left(\tbm{s}_{i}\fbm{s}_{i}+\frac{1}{2}\dottbm{q}_{i}\dotbm{q}_{i}+\frac{1}{2}\dothattbm{q}_{i}\dothatbm{q}_{i}\right)\\
&+c_{i}\cdot\left[\|\dotbm{q}_{i}\|^{2}\cdot\tbm{s}_{i}\fbm{s}_{i}+\frac{1}{4}(\fbm{q}_{i}-\hbm{q}_{i})^\mathsf{T}(\fbm{q}_{i}-\hbm{q}_{i})\right]\textrm{.}
\end{aligned}
\end{equation}
The sliding surfaces $\fbm{s}_{i}$~\eqref{equ32} imply that:
\begin{equation}\label{equ38}
\begin{aligned}
&\sum_{i=m,s}\left[\dottbm{q}_{i}\fbm{P}_{i}\cdot(\fbm{q}_{i}-\hbm{q}_{i})-\tbm{s}_{i}\fbm{P}_{i}\cdot(\fbm{q}_{i}-\hbm{q}_{i})-\tbm{s}_{i}\fbm{D}_{i}\dotbm{q}_{i}\right]\\
\leq &-\sum_{i=m,s}\bigg[\sigma\cdot(\fbm{q}_{i}-\hbm{q}_{i})^\mathsf{T}\left(\fbm{P}_{i}-\frac{\mu_{i}}{4}\cdot\fbm{D}_{i}\right)\cdot(\fbm{q}_{i}-\hbm{q}_{i})\\
&\quad \quad \quad \quad +\left(1-\frac{\sigma}{\mu_{i}}\right)\cdot\dottbm{q}_{i}\fbm{D}_{i}\dotbm{q}_{i}\bigg]
\end{aligned}
\end{equation}
with $\mu_{i}>0$. Similarly, the sliding surfaces $\hbm{s}_{i}$~\eqref{equ29} imply that:
\begin{equation}\label{equ39}
\begin{aligned}
&\sum_{i=m,s}\bigg[\dothattbm{q}_{i}\fbm{P}_{i}\cdot(\hbm{q}_{i}-\fbm{q}_{i})-\thbm{s}_{i}\hbm{e}_{i}-\thbm{s}_{i}\hbm{D}_{i}\dothatbm{q}_{i}\bigg]\\
&+\dothattbm{q}_{m}\hbm{P}\cdot(\hbm{q}_{m}-\hbm{q}_{s})+\dothattbm{q}_{s}\hbm{P}\cdot(\hbm{q}_{s}-\hbm{q}_{m})\\
\leq &-\sum_{i=m,s}\left[\left(1-\frac{\hat{\sigma}}{\nu_{i}}\right)\cdot\dothattbm{q}_{i}\hbm{D}_{i}\dothatbm{q}_{i}+\hat{\sigma}\cdot\thbm{e}_{i}\left(\fbm{I}-\frac{\nu_{i}}{4}\cdot\hbm{D}_{i}\right)\cdot\hbm{e}_{i}\right]
\textrm{,}
\end{aligned}
\end{equation} 
with $\nu_{i}$ a positive constant, and that:
\begin{equation}\label{equ40}
\begin{aligned}
&\big(\hbm{s}_{m}-\hbm{s}_{s}\big)^\mathsf{T}\hbm{P}\cdot\left(\dothatbm{q}_{m}-\dothatbm{q}_{s}\right)+\sum_{i=m,s}\thbm{s}_{i}\fbm{P}_{i}\cdot\left(\dothatbm{q}_{i}-\dotbm{q}_{i}\right)\\
\leq &\sum_{i=m,s}\zeta_{i}\cdot\thbm{s}_{i}\left(\hbm{P}\hbm{P}+\fbm{P}_{i}\fbm{P}_{i}\right)\cdot\hbm{s}_{i}+\frac{1}{2\zeta_{i}}\cdot\left(3\dothattbm{q}_{i}\dothatbm{q}_{i}+\dottbm{q}_{i}\dotbm{q}_{i}\right)\textrm{,}
\end{aligned}
\end{equation}
with $\zeta_{i}$ another positive constant. 

The derivative of $V_{2}$ is bounded by:
\begin{equation}\label{equ41}
\begin{aligned}
\dot{V}_{2}=&-\gamma\cdot V_{2}+\sum_{i=m,s}\overline{d}_{i}\cdot\dothattbm{q}_{i}\fbm{Q}_{i}\dothatbm{q}_{i}\\
&-\sum_{i=m,s}\int^{t}_{t-\overline{d}_{i}}e^{-\gamma(t-\xi)}\cdot\dothattbm{q}_{i}(\xi)\fbm{Q}_{i}\dothatbm{q}_{i}(\xi)d\xi\\
\leq &-\gamma\cdot V_{2}+\sum_{i=m,s}\overline{d}_{i}\cdot\dothattbm{q}_{i}\fbm{Q}_{i}\dothatbm{q}_{i}\\
&-\sum_{i=m,s}e^{-\gamma\overline{d}_{i}}\cdot\int^{t}_{t-d_{i}}\dothattbm{q}_{i}(\xi)\fbm{Q}_{i}\dothatbm{q}_{i}(\xi)d\xi\textrm{.}
\end{aligned}
\end{equation}
Then, Lemma~\ref{item:L1}~\cite{Hua2010} yields:
\begin{equation}\label{equ42}
\begin{aligned}
&-\thbm{s}_{i}\left(\hat{\sigma}\cdot\hbm{K}_{i}+\fbm{I}\right)\cdot\hbm{P}\cdot\left(\hbm{q}_{j}-\hbm{q}_{jd}\right)\\
&-e^{-\gamma\overline{d}_{j}}\cdot\int^{t}_{t-d_{j}}\dothattbm{q}_{j}(\xi)\fbm{Q}_{j}\dothatbm{q}_{j}(\xi)d\xi\\
\leq &\frac{\overline{d}_{j}}{4\cdot e^{\gamma\overline{d}_{j}}}\cdot\thbm{s}_{i}\left(\hat{\sigma}\cdot\hbm{K}_{i}+\fbm{I}\right)\cdot\hbm{P}\ibm{Q}_{j}\hbm{P}\left(\hat{\sigma}\cdot\hbm{K}_{i}+\fbm{I}\right)\cdot\hbm{s}_{i}
\textrm{,}
\end{aligned}
\end{equation}
where $i,j={m,s}$ and $i\neq j$.

After substitution from~\eqsref{equ37}{equ40}, \eqref{equ42} and~\eqref{equ16}, the sum of~\eqref{equ36} and~\eqref{equ41} leads to:
\begin{equation}\label{equ43}
\begin{aligned}
\dot{V}\leq &-\sum_{i=m,s}\bigg[\tbm{s}_{i}\overline{\fbm{K}}_{i}(\dotbm{q}_{i})\cdot\fbm{s}_{i}+(\fbm{q}_{i}-\hbm{q}_{i})^\mathsf{T}\overline{\fbm{P}}_{i}(\fbm{q}_{i}-\hbm{q}_{i})\\
&\quad \quad \quad \quad +\thbm{s}_{i}\tilde{\fbm{K}}_{i}\hbm{s}_{i}+\hat{\sigma}\cdot\thbm{e}_{i}\cdot\left(\fbm{I}-\frac{\nu_{i}}{4}\cdot\hbm{D}_{i}\right)\cdot\hbm{e}_{i}\\
&\quad \quad \quad \quad +\dottbm{q}_{i}\overline{\fbm{D}}_{i}\dotbm{q}_{i}+\dothattbm{q}_{i}\tilde{\fbm{D}}_{i}\dothatbm{q}_{i}\bigg]\\
&-\gamma\cdot V_{2}+\frac{1}{4}\cdot\left(\frac{\|\bm{\tau}_{h}\|^{2}}{\omega_{m}}+\frac{\|\bm{\tau}_{e}\|^{2}}{\omega_{s}}\right)\textrm{,}
\end{aligned}
\end{equation}
where:
\begin{equation}\label{equ44}
\begin{aligned}
\overline{\fbm{K}}_{i}(\dotbm{q}_{i})=&\fbm{K}_{i}(\dotbm{q}_{i})-\left(\sigma\cdot\lambda_{i2}+\sigma\cdot c_{i}\cdot\|\dotbm{q}_{i}\|^{2}+\omega_{i}\right)\cdot\fbm{I}\textrm{,}\\
\overline{\fbm{P}}_{i}=&\sigma\cdot\fbm{P}_{i}-\frac{\sigma}{4}\cdot\left(c_{i}\cdot\fbm{I}+\mu_{i}\cdot\fbm{D}_{i}\right)\textrm{,}\\
\overline{\fbm{D}}_{i}=&\left(1-\frac{\sigma}{\mu_{i}}\right)\cdot\fbm{D}_{i}-\frac{1}{2}\cdot\left(\sigma\cdot\lambda_{i2}+\frac{\hat{\sigma}}{\zeta_{i}}\right)\cdot\fbm{I}\textrm{,}\\
\tilbm{K}_{i}=&\hbm{K}_{i}-\hat{\sigma}\cdot\zeta_{i}\cdot\left(\hbm{P}\hbm{P}+\fbm{P}_{i}\fbm{P}_{i}\right)\\
&-\frac{\overline{d}_{j}}{4\cdot e^{\gamma\overline{d}_{j}}}\cdot\left(\hat{\sigma}\cdot\hbm{K}_{i}+\fbm{I}\right)\cdot\hbm{P}\ibm{Q}_{j}\hbm{P}\left(\hat{\sigma}\cdot\hbm{K}_{i}+\fbm{I}\right)\textrm{,}\\
\tilbm{D}_{i}=&\left(1-\frac{\hat{\sigma}}{\nu_{i}}\right)\cdot\hbm{D}_{i}-\overline{d}_{i}\cdot\fbm{Q}_{i}-\left(\frac{\sigma\cdot\lambda_{i2}}{2}+\frac{3\hat{\sigma}}{2\zeta_{i}}\right)\cdot\fbm{I}\textrm{.}
\end{aligned}
\end{equation}
Letting $\tilde{\fbm{q}}=\begin{bmatrix}(\fbm{q}_{m}-\hbm{q}_{m})^\mathsf{T}, (\fbm{q}_{s}-\hbm{q}_{s})^\mathsf{T}, (\hbm{q}_{m}-\hbm{q}_{s})^\mathsf{T}\end{bmatrix}^\mathsf{T}$ and using the definition of $\hbm{e}_{i}$ in~\eqref{equ30} lead to:
\begin{equation}\label{equ45}
\begin{aligned}
&\sum_{i=m,s}\Big[(\fbm{q}_{i}-\hbm{q}_{i})^\mathsf{T}\overline{\fbm{P}}_{i}(\fbm{q}_{i}-\hbm{q}_{i})\\
&\quad \quad \quad +\hat{\sigma}\cdot\thbm{e}_{i}\cdot\left(\fbm{I}-\frac{\nu_{i}}{4}\cdot\hbm{D}_{i}\right)\cdot\hbm{e}_{i}\Big]=\ttilbm{q}\tilbm{P}\tilbm{q}\textrm{,}
\end{aligned}
\end{equation}
where $\tilbm{P}=[\fbm{B}_{rc}]$ with $\fbm{B}_{12}=\fbm{B}_{21}=\fbm{0}$ and:
\begin{equation}\label{equ46}
\begin{aligned}
\fbm{B}_{11}=&\fbm{P}_{m}\left(\fbm{I}-\frac{\nu_{m}}{4}\cdot\hbm{D}_{m}\right)\cdot\fbm{P}_{m}+\overline{\fbm{P}}_{m}\textrm{,}\\
\fbm{B}_{13}=&\tbm{B}_{31}=\fbm{P}_{m}\left(\fbm{I}-\frac{\nu_{m}}{4}\cdot\hbm{D}_{m}\right)\cdot\hbm{P}\textrm{,}\\
\fbm{B}_{22}=&\fbm{P}_{s}\left(\fbm{I}-\frac{\nu_{s}}{4}\cdot\hbm{D}_{s}\right)\cdot\fbm{P}_{s}+\overline{\fbm{P}}_{s}\textrm{,}\\
\fbm{B}_{23}=&\tbm{B}_{32}=\fbm{P}_{s}\left(\fbm{I}-\frac{\nu_{s}}{4}\cdot\hbm{D}_{s}\right)\cdot\hbm{P}\textrm{,}\\
\fbm{B}_{33}=&\hbm{P}\left(2\fbm{I}-\frac{\nu_{m}}{4}\cdot\hbm{D}_{m}-\frac{\nu_{s}}{4}\cdot\hbm{D}_{s}\right)\hbm{P}\textrm{.}
\end{aligned}
\end{equation}

\begin{proposition}
Let:
\begin{equation}\label{equ47}
\fbm{P}_{i}=\fbm{P}\textrm{,}\quad \hbm{D}_{i}=\hbm{D}\textrm{,}\quad \text{and}\quad \nu_{i}=\nu\textrm{,}
\end{equation}
and:
\begin{equation}\label{equ48}
\overline{\fbm{P}}_{i}\succ\fbm{0}\textrm{,}\quad \fbm{I}-\frac{\nu}{4}\cdot\hbm{D}\succ\fbm{0}\textrm{,}
\end{equation}
for $i=m,s$. Then there exists $\delta>0$ such that:
\begin{equation}\label{equ49}
\tilbm{P}\succeq \frac{\delta}{2}\cdot\max\left(\fbm{P}, \hbm{P}\right)\textrm{.}
\end{equation}
\end{proposition}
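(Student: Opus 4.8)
The plan is to exploit the fact that, under the reductions~\eqref{equ47}, every factor appearing in the blocks~\eqref{equ46} is \emph{diagonal} and positive definite, so that all these matrices commute and the quadratic form $\ttilbm{q}\tilbm{P}\tilbm{q}$ can be reorganized by completing squares. First I would set $\fbm{W}=\fbm{I}-\frac{\nu}{4}\hbm{D}$, which is positive definite by~\eqref{equ48}, and rewrite~\eqref{equ46} as $\fbm{B}_{11}=\fbm{P}\fbm{W}\fbm{P}+\overline{\fbm{P}}_m$, $\fbm{B}_{22}=\fbm{P}\fbm{W}\fbm{P}+\overline{\fbm{P}}_s$, $\fbm{B}_{13}=\fbm{B}_{23}=\fbm{P}\fbm{W}\hbm{P}$, $\fbm{B}_{33}=2\hbm{P}\fbm{W}\hbm{P}$ and $\fbm{B}_{12}=\fbm{0}$. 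The crucial structural observation is that the factor $2$ in $\fbm{B}_{33}$ supplies exactly one copy of $\hbm{P}\fbm{W}\hbm{P}$ per robot.

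Using commutativity of the diagonal factors, I would then verify the identity
\begin{equation*}
\ttilbm{q}\tilbm{P}\tilbm{q}=\sum_{i=m,s}\bigl[\fbm{P}(\fbm{q}_i-\hbm{q}_i)+\hbm{P}(\hbm{q}_m-\hbm{q}_s)\bigr]^\mathsf{T}\fbm{W}\bigl[\fbm{P}(\fbm{q}_i-\hbm{q}_i)+\hbm{P}(\hbm{q}_m-\hbm{q}_s)\bigr]+\sum_{i=m,s}(\fbm{q}_i-\hbm{q}_i)^\mathsf{T}\overline{\fbm{P}}_i(\fbm{q}_i-\hbm{q}_i)\textrm{,}
\end{equation*}
whose term-by-term expansion reproduces all five blocks of~\eqref{equ46}: the quadratic-in-$(\hbm{q}_m-\hbm{q}_s)$ pieces of the two squares sum to $\fbm{B}_{33}$, the cross terms give $\fbm{B}_{13}$ and $\fbm{B}_{23}$, the diagonal pieces plus $\overline{\fbm{P}}_i$ give $\fbm{B}_{11}$ and $\fbm{B}_{22}$, and no $(\fbm{q}_m-\hbm{q}_m)$--$(\fbm{q}_s-\hbm{q}_s)$ cross term appears, consistent with $\fbm{B}_{12}=\fbm{0}$. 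Each summand on the right is nonnegative since $\fbm{W}\succ\fbm{0}$ and $\overline{\fbm{P}}_i\succ\fbm{0}$, so already $\tilbm{P}\succeq\fbm{0}$.

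To upgrade this to strict positive definiteness I would examine the kernel: if $\ttilbm{q}\tilbm{P}\tilbm{q}=0$ then every summand vanishes, so $\overline{\fbm{P}}_i\succ\fbm{0}$ forces $\fbm{q}_i-\hbm{q}_i=\fbm{0}$ for $i=m,s$, whereupon the remaining squares reduce to $[\hbm{P}(\hbm{q}_m-\hbm{q}_s)]^\mathsf{T}\fbm{W}[\hbm{P}(\hbm{q}_m-\hbm{q}_s)]=0$ and $\hbm{P},\fbm{W}\succ\fbm{0}$ force $\hbm{q}_m-\hbm{q}_s=\fbm{0}$. Hence $\tilbm{q}=\fbm{0}$ and $\tilbm{P}\succ\fbm{0}$. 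Finally, since $\tilbm{P}$ is a constant symmetric positive definite matrix and the reference matrix $\max(\fbm{P},\hbm{P})$ is likewise positive definite, the existence of $\delta>0$ satisfying~\eqref{equ49} follows from finite dimensionality; one may take $\delta/2$ to be the smallest generalized eigenvalue of the pencil $(\tilbm{P},\max(\fbm{P},\hbm{P}))$.

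I expect the completion of squares to be the main step: recognizing that the off-diagonal coupling $\fbm{B}_{13},\fbm{B}_{23}$ together with the doubled block $\fbm{B}_{33}$ assembles precisely into two independent squares is what makes the positivity transparent and isolates the roles of the standing assumptions~\eqref{equ48}. An alternative route is to take the Schur complement of $\fbm{B}_{33}$, reducing positivity of $\tilbm{P}$ to that of $\bigl[\begin{smallmatrix}\frac{1}{2}\fbm{P}\fbm{W}\fbm{P}+\overline{\fbm{P}}_m & -\frac{1}{2}\fbm{P}\fbm{W}\fbm{P}\\ -\frac{1}{2}\fbm{P}\fbm{W}\fbm{P} & \frac{1}{2}\fbm{P}\fbm{W}\fbm{P}+\overline{\fbm{P}}_s\end{smallmatrix}\bigr]$, whose positive definiteness follows because its determinant equals $\overline{\fbm{P}}_m\overline{\fbm{P}}_s+\tfrac{1}{2}\fbm{P}\fbm{W}\fbm{P}(\overline{\fbm{P}}_m+\overline{\fbm{P}}_s)\succ\fbm{0}$; this works equally well but obscures the clean algebraic structure, so I would prefer the completion of squares.
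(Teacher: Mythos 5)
Your proof is correct, and its main route differs from the paper's. The paper proves $\tilbm{P}\succ\fbm{0}$ by a two-step Schur complement: it first checks $\fbm{B}_{33}\succ\fbm{0}$, then computes $\bigl[\begin{smallmatrix}\fbm{B}_{13}\\ \fbm{B}_{23}\end{smallmatrix}\bigr]\fbm{B}_{33}^{-1}\bigl[\begin{smallmatrix}\fbm{B}_{13}^\mathsf{T}\ \fbm{B}_{23}^\mathsf{T}\end{smallmatrix}\bigr]=\frac{1}{2}\bigl[\begin{smallmatrix}1&1\\1&1\end{smallmatrix}\bigr]\otimes\bigl[(\fbm{I}-\frac{\nu}{4}\hbm{D})\fbm{P}^{2}\bigr]$ and observes that the resulting complement is $\mathrm{diag}(\overline{\fbm{P}}_{m},\overline{\fbm{P}}_{s})$ plus a positive semidefinite Kronecker term --- exactly the ``alternative route'' you sketch at the end (your determinant remark there is dispensable; the paper's $\bigl[\begin{smallmatrix}1&-1\\-1&1\end{smallmatrix}\bigr]\succeq\fbm{0}$ observation is cleaner and avoids any block-determinant subtlety). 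Your primary argument instead writes $\ttilbm{q}\tilbm{P}\tilbm{q}$ as a sum of $\fbm{W}$-weighted squares plus the $\overline{\fbm{P}}_{i}$-forms and then runs a kernel argument; this is essentially a rediscovery of the identity~\eqref{equ45} by which the paper \emph{constructed} $\tilbm{P}$ in the first place (your completed squares are, up to sign conventions, the vectors $\hbm{e}_{i}$ of~\eqref{equ30}), so it makes the positivity structurally transparent where the paper's Schur computation is more mechanical. Two further points in your favor: you explicitly supply the final step from $\tilbm{P}\succ\fbm{0}$ to the existence of $\delta$ via the generalized eigenvalue of the pencil, which the paper omits entirely (it stops at ``it suffices to show $\tilbm{P}$ is positive definite''); and your reliance on commutativity of the diagonal gain matrices is exactly the same implicit assumption the paper uses to collapse $\fbm{B}_{13}\fbm{B}_{33}^{-1}\fbm{B}_{13}^\mathsf{T}$ to $\frac{1}{2}\fbm{W}\fbm{P}^{2}$, so you are not assuming more than the paper does.
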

\begin{proof}
It suffices to show that $\tilbm{P}$ is positive definite. By the Schur complement decomposition, $\tilbm{P}\succ\fbm{0}$ if and only if:
\begin{align*}
\fbm{B}_{33}\succ\fbm{0} \quad\textrm{and}\quad
\begin{aligned}
\begin{bmatrix}\fbm{B}_{11} &\fbm{0}\\ \fbm{0} &\fbm{B}_{22}\end{bmatrix}-\begin{bmatrix}\fbm{B}_{13}\\ \fbm{B}_{23}\end{bmatrix}\ibm{B}_{33}\begin{bmatrix}\tbm{B}_{13}\ \tbm{B}_{23}\end{bmatrix}\succ\fbm{0}\textrm{.}
\end{aligned}
\end{align*}
From~\eqref{equ46}-\eqref{equ48}, it follows that $\fbm{B}_{33}\succ\fbm{0}$, that:
\begin{align*}
\begin{aligned}
\begin{bmatrix}\fbm{B}_{13}\\ \fbm{B}_{23}\end{bmatrix}\ibm{B}_{33}\begin{bmatrix}\tbm{B}_{13}\ \tbm{B}_{23}\end{bmatrix}=\frac{1}{2}\cdot\begin{bmatrix}1 & 1\\ 1 & 1\end{bmatrix}\otimes\left[\left(\fbm{I}-\frac{\nu}{4}\cdot\hbm{D}\right)\cdot\fbm{P}^{2}\right]
\textrm{.}
\end{aligned}
\end{align*}
and, further, that:
\begin{align*}
\begin{aligned}
&\begin{bmatrix}\fbm{B}_{11} &\fbm{0}\\ \fbm{0} &\fbm{B}_{22}\end{bmatrix}-\begin{bmatrix}\fbm{B}_{13}\\ \fbm{B}_{23}\end{bmatrix}\ibm{B}_{33}\begin{bmatrix}\tbm{B}_{13}\ \tbm{B}_{23}\end{bmatrix}\\
=&\begin{bmatrix}\overline{\fbm{P}}_{m} & \fbm{0}\\ \fbm{0} & \overline{\fbm{P}}_{s}\end{bmatrix}+\frac{1}{2}\cdot\begin{bmatrix}1 & -1\\ -1 & 1\end{bmatrix}\otimes\left[\left(\fbm{I}-\frac{\nu}{4}\cdot\hbm{D}\right)\cdot\fbm{P}^{2}\right]\succ\fbm{0}\textrm{.}
\end{aligned}
\end{align*}
\end{proof}

\begin{theorem}\label{theorem2}
The teleoperator~\eqref{equ1} in closed-loop with~\eqref{equ28} and~\eqref{equ34} is ISS with input $\fbm{u}=[\tfbm{\tau}_{h}\ \tfbm{\tau}_{e}]^\mathsf{T}$ and state $\fbm{x}=[\dottbm{q}_{m}\ \dottbm{q}_{s}\ \dothattbm{q}_{m}\ \dothattbm{q}_{s}\ (\fbm{q}_{m}-\hbm{q}_{m})^\mathsf{T}\ (\fbm{q}_{s}-\hbm{q}_{s})^\mathsf{T}\ (\hbm{q}_{m}-\hbm{q}_{s})^\mathsf{T}]^\mathsf{T}$ if the parameters and control gains satisfy conditions~\eqref{equ47}-\eqref{equ48} and:
\begin{equation}\label{equ50}
\overline{\fbm{K}}_{i}(\dotbm{q}_{i})\succeq \frac{\psi}{2}\cdot\lambda_{i2}\cdot\fbm{I}\textrm{,}\quad \tilbm{K}_{i}\succeq \frac{\psi}{2}\cdot\fbm{I}\textrm{,}\quad \overline{\fbm{D}}_{i}\succeq\fbm{0}\textrm{,}\quad \tilbm{D}_{i}\succeq\fbm{0}\textrm{,}
\end{equation}
where $i=m,s$ and $\psi>0$.
\end{theorem}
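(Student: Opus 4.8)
The plan is to verify the two hypotheses of Theorem~\ref{item:T2} for the composite functional $V=V_1+V_2$ in~\eqref{equ35}, exactly as the proof of Theorem~\ref{theorem1} verified Theorem~\ref{item:T1} in the delay-free case, but now carrying the Lyapunov--Krasovskii term $V_2$ through the argument.

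First I would establish condition~a). For the lower bound, note that $V_2\geq\fbm{0}$, so $V\geq V_1$; invoking Property~\ref{item:P1}, the block-diagonal position form in $V_1$ (which, under $\fbm{P}_i=\fbm{P}$, is $\frac{1}{2}\ttilbm{q}\,\mathrm{blkdiag}(\fbm{P},\fbm{P},\hbm{P})\,\tilbm{q}\succ\fbm{0}$), and the sliding-surface identities~\eqref{equ29} and~\eqref{equ32} to trade $\dotbm{q}_i,\dothatbm{q}_i$ for $\fbm{s}_i,\hbm{s}_i$ and the position errors as in~\eqref{equ20}, one obtains $V\geq\alpha_1(\|\fbm{x}\|)=a_1\|\fbm{x}\|^2$ for a suitable $a_1>0$. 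For the upper bound, Property~\ref{item:P1} bounds $V_1$ by a quadratic in the instantaneous state, while $V_2$ is bounded by the delay-window supremum, $V_2\leq\mathrm{const}\cdot|\fbm{x}_t|_r^2$. Choosing the weighted norm $|\fbm{x}_t|_a$ of~\eqref{equ2} so that it contains this window then yields $V\leq\alpha_2(|\fbm{x}_t|_a)=a_2|\fbm{x}_t|_a^2$; the norm equivalence~\eqref{equ2} is precisely the device that reconciles the current-state lower bound with the memory-dependent upper bound.

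Next I would establish condition~b) from the assembled derivative bound~\eqref{equ43}--\eqref{equ44}. Under~\eqref{equ50} the four gain matrices satisfy $\overline{\fbm{K}}_i(\dotbm{q}_i)\succeq\frac{\psi}{2}\lambda_{i2}\fbm{I}$, $\tilbm{K}_i\succeq\frac{\psi}{2}\fbm{I}$, $\overline{\fbm{D}}_i\succeq\fbm{0}$ and $\tilbm{D}_i\succeq\fbm{0}$, while by~\eqref{equ45}--\eqref{equ46} together with~\eqref{equ49} the position terms collapse to $\ttilbm{q}\tilbm{P}\tilbm{q}\geq\frac{\delta}{2}\ttilbm{q}\max(\fbm{P},\hbm{P})\tilbm{q}$. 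Bounding $\tbm{s}_m\bm{\tau}_h+\tbm{s}_s\bm{\tau}_e$ as in~\eqref{equ16}, the derivative reduces to $\dot{V}\leq-W(\fbm{x}_t)+\frac{1}{4\omega}\|\fbm{u}\|^2$ with $\omega=\min(\omega_m,\omega_s)$ and $W$ a positive-definite quadratic in $(\fbm{s}_i,\hbm{s}_i,\dotbm{q}_i,\dothatbm{q}_i,\tilbm{q})$ augmented by $\gamma\cdot V_2$. Since $W$ dominates both the instantaneous part of $|\fbm{x}_t|_a^2$ (after re-expressing $\fbm{s}_i,\hbm{s}_i$ through the velocities and position errors) and, via the term $\gamma\cdot V_2$, its memory part, there is $c>0$ with $W\geq c\cdot|\fbm{x}_t|_a^2$, whence $\dot{V}\leq-c\cdot|\fbm{x}_t|_a^2+\frac{1}{4\omega}\|\fbm{u}\|^2$. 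Taking $\rho(\|\fbm{u}\|)=\sqrt{1/(2c\omega)}\cdot\|\fbm{u}\|$ gives $\dot{V}\leq-\frac{c}{2}|\fbm{x}_t|_a^2=:-\alpha_3(|\fbm{x}_t|_a)$ whenever $|\fbm{x}_t|_a\geq\rho(\|\fbm{u}\|)$, which is condition~b). Theorem~\ref{item:T2} then certifies ISS; equivalently, integrating $\dot{V}\leq-c\cdot V+\frac{1}{4\omega}\|\fbm{u}\|^2$ as in~\eqref{equ27} returns the estimate~\eqref{equ6} directly.

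The main obstacle is the bookkeeping around $V_2$ and the delay window. Everything involving $\fbm{s}_i,\dotbm{q}_i$ and the position errors mirrors Theorem~\ref{theorem1}, but the distributed-delay term forces two genuinely new steps: constructing the norm $|\fbm{x}_t|_a$ so that the single inequality chain in condition~a) holds with a current-state lower bound and a window-dependent upper bound, and checking that the $-\gamma\cdot V_2$ produced in~\eqref{equ41} exactly supplies the coercivity of $W$ against the memory part of $|\fbm{x}_t|_a^2$. The remaining difficulty is purely computational: matching the heterogeneous coefficients (the inertia bounds $\lambda_{i1},\lambda_{i2}$, the proxy-inertia eigenvalues, and the gain minima $\psi,\delta,\gamma$) so that one rate $c>0$ dominates every quadratic block of $V$ simultaneously, which is tedious but routine once $\tilbm{P}\succ\fbm{0}$ is in hand from~\eqref{equ49}.
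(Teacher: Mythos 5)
Your proposal is correct and follows essentially the same route as the paper: lower-bound $V$ by a quadratic in the instantaneous state via the sliding-surface identities and $V_2\geq 0$, upper-bound it by a window-dependent quadratic, reconcile the two through the norm $|\fbm{x}_t|_a$ (the paper takes $|\fbm{x}_t|_a=\sqrt{V(\fbm{x}_t)}$ with $\gamma_a=\sqrt{a_1}$, $\overline{\gamma}_a=\sqrt{a_2}$, which is exactly the device you describe), and then collapse the derivative bound to $\dot{V}\leq-\kappa V+\frac{1}{4\omega}\|\fbm{u}\|^{2}$ with $\kappa=\min(\psi,\delta,\gamma)$ playing the role of your rate $c$. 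The only cosmetic difference is that the paper sidesteps your final coercivity check $W\geq c|\fbm{x}_t|_a^2$ by making $|\fbm{x}_t|_a^2$ equal to $V$ itself, so that the comparison is immediate.
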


\begin{proof}
The definitions of $\hbm{s}_{i}$ and $\fbm{s}_{i}$, $i=m,s$, imply that:
\begin{equation}\label{equ51}
\begin{aligned}
\|\dothatbm{q}_{i}\|^{2}\leq &2\cdot\|\hbm{s}_{i}\|^{2}+2\hat{\sigma}^{2}\cdot\|\hbm{e}_{i}\|^{2}\textrm{,}\\
\|\hbm{s}_{i}\|^{2}\leq &2\cdot\|\dothatbm{q}_{i}\|^{2}+2\hat{\sigma}^{2}\cdot\|\hbm{e}_{i}\|^{2}\textrm{,}
\end{aligned}
\end{equation}
and that:
\begin{equation}\label{equ52}
\begin{aligned}
\|\dotbm{q}_{i}\|^{2}\leq &2\cdot\|\fbm{s}_{i}\|^{2}+2\sigma^{2}\cdot\|\fbm{q}_{i}-\hbm{q}_{i}\|^{2}\textrm{,}\\
\|\fbm{s}_{i}\|^{2}\leq &2\cdot\|\dotbm{q}_{i}\|^{2}+2\sigma^{2}\cdot\|\fbm{q}_{i}-\hbm{q}_{i}\|^{2}\textrm{.}
\end{aligned}
\end{equation}
Because $V_{2}\geq 0$ and
\begin{equation}\label{equ53}
\|\hbm{e}_{i}\|^{2}\leq 2P^{2}\cdot\|\fbm{q}_{i}-\hbm{q}_{i}\|^{2}+2\hat{P}^{2}\cdot\|\hbm{q}_{m}-\hbm{q}_{s}\|^{2}\textrm{,}
\end{equation}
where $P$ and $\hat{P}$ are the maximum eigenvalue of $\fbm{P}$ and $\hbm{P}$, the Lyapunov candidate $V$ can be lower-bounded by:
\begin{equation}\label{equ54}
\begin{aligned}
V\geq &\frac{1}{2}\sum_{i=m,s}\left(\lambda_{i1}\cdot\|\fbm{s}_{i}\|^{2}+\hat{\lambda}_{i1}\cdot\|\hbm{s}_{i}\|^{2}+p\cdot\|\fbm{q}_{i}-\hbm{q}_{i}\|^{2}\right)\\
&+\frac{\hat{p}}{2}\cdot\|\hbm{q}_{m}-\hbm{q}_{s}\|^{2}\geq\hat{\alpha}_{1}(\|\fbm{x}\|)\textrm{,}
\end{aligned}
\end{equation}
where $p$ and $\hat{p}$ are the minimum eigenvalue of $\fbm{P}$ and $\hbm{P}$, and $\hat{\alpha}_{1}(\|\fbm{x}\|)=a_{1}\cdot\|\fbm{x}\|^{2}$ is a function of class $\mathcal{K}_{\infty}$ with:
\begin{align*}
\frac{1}{a_{1}}=\sum_{i=m,s}\left(\frac{4}{\lambda_{i1}}+\frac{4}{\hat{\lambda}_{i1}}\right)+\frac{4\sigma^{2}+8\hat{\sigma}^{2}P^{2}}{p}+\frac{16\hat{\sigma}^{2}\hat{P}^{2}}{\hat{p}}\textrm{.}
\end{align*}
Further, the definition of $V_{2}$ indicates that:
\begin{equation}\label{equ55}
\begin{aligned}
V_{2}\leq &\sum_{i=m,s}\overline{Q}_{i}\cdot|\dothatbm{q}_{i}|^{2}_{r}\int^{0}_{-\overline{d}_{i}}\int^{t}_{t+\theta}e^{-\gamma(t-\xi)}d\xi d\theta\\
\leq &\sum_{i=m,s}\frac{1}{2}\cdot\overline{d}^{2}_{i}\overline{Q}_{i}\cdot|\dothatbm{q}_{i}|^{2}_{r}\textrm{,}
\end{aligned}
\end{equation}
where $\overline{Q}_{i}$ is the maximum eigenvalue of $\fbm{Q}_{i}$. Then $V$ can be upper-bounded by:
\begin{equation}\label{equ56}
\begin{aligned}
V\leq &\frac{1}{2}\sum_{i=m,s}\left(\lambda_{i2}\cdot\|\fbm{s}_{i}\|^{2}+\hat{\lambda}_{i2}\cdot\|\hbm{s}_{i}\|^{2}+P\cdot\|\fbm{q}_{i}-\hbm{q}_{i}\|^{2}\right)\\
&+\frac{\hat{P}}{2}\cdot\|\hbm{q}_{m}-\hbm{q}_{s}\|^{2}+V_{2}\leq \hat{\alpha}_{2}(|\fbm{x}|_{r})\textrm{,}
\end{aligned}
\end{equation}
where $\hat{\alpha}_{2}(|\fbm{x}|_{r})=a_{2}\cdot|\fbm{x}|^{2}_{r}$ with
\begin{align*}
a_{2}=\max\limits_{i=m,s}\Bigg[\max\Bigg(&\lambda_{i2},\  \hat{\lambda}_{i2}+\frac{\overline{d}^{2}_{i}\overline{Q}_{i}}{2},\ 2\hat{\lambda}_{i2}\hat{\sigma}^{2}\hat{P}^{2}+\frac{\hat{P}}{2},\\
&\lambda_{i2}\sigma^{2}+2\hat{\lambda}_{i2}\hat{\sigma}^{2}P^{2}+\frac{P}{2}\Bigg)\Bigg]\textrm{.}
\end{align*}
Let $\gamma_{a}=\sqrt{a_{1}}$ and $\overline{\gamma}_{a}=\sqrt{a_{2}}$, define $|\fbm{x}_{t}|_{a}=\sqrt{V(\fbm{x}_{t})}$ to satisfy~\eqref{equ2}, and select functions $\alpha_{1}(\|\fbm{x}\|)=\hat{\alpha}_{1}(\|\fbm{x}\|)$ and $\alpha_{2}(|\fbm{x}_{t}|_{a})=|\fbm{x}_{t}|^{2}_{a}$ of class $\mathcal{K}_{\infty}$ to trivially guarantee condition a) of Theorem~\ref{item:T2}.

After substitution from~\eqref{equ45} in~\eqref{equ43}, using condition~\eqref{equ50} and setting $\kappa=\min\left(\psi, \delta, \gamma\right)$, $\dot{V}$ can be upper-bounded by:
\begin{equation}\label{equ57}
\begin{aligned}
\dot{V}\leq -\kappa\cdot V+\frac{1}{4\omega}\cdot\|\fbm{u}\|^{2}\textrm{.}
\end{aligned}
\end{equation}
Then the functions $\alpha_{3}$ and $\rho$ of class $\mathcal{K} $ and defined by:
\begin{equation}\label{equ58}
\alpha_{3}(|\fbm{x}_{t}|_{a})=\frac{\kappa}{2}\cdot|\fbm{x}_{t}|^{2}_{a}\textrm{,}\ \text{and}\ \rho(\|\fbm{u}\|)=\sqrt{\frac{1}{2\kappa\omega}}\cdot\|\fbm{u}\|
\end{equation}
ensure condition b) of Theorem~\ref{item:T2}~\cite{Jiang2006}.

Thus, the Lyapunov candidate $V$ obeys both conditions of Theorem~\ref{item:T2}~\cite{Jiang2006} and the teleoperator is ISS.
\end{proof}

\begin{corollary}\label{corollary2}
ISS teleoperation renders invariant the set:
\begin{equation}\label{equ59}
\mathcal{S}_{I}=\left\{\fbm{q}_{m}-\fbm{q}_{s}:\ \|\fbm{q}_{m}-\fbm{q}_{s}\|^{2}\leq \frac{4}{p'}\cdot\left(V_{0}+\frac{\overline{\tau}^{2}}{4\kappa\omega}\right)\right\}\textrm{,}
\end{equation}
and globally attractive the set:
\begin{equation}\label{equ60}
\mathcal{S}_{A}=\left\{\fbm{q}_{m}-\fbm{q}_{s}:\ \|\fbm{q}_{m}-\fbm{q}_{s}\|^{2}\leq \frac{\overline{\tau}^{2}}{p'\kappa\omega}\right\}\textrm{,}
\end{equation}
where $p'=\min(p_{m},p_{s},\hat{p})$.
\end{corollary}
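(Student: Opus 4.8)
The plan is to mirror the argument used for the delay-free case in Corollary~\ref{corollary1}, the only genuinely new ingredient being that the Lyapunov functional~\eqref{equ35} no longer contains the physical master--slave error $\fbm{q}_{m}-\fbm{q}_{s}$ directly, but only the proxy errors $\fbm{q}_{i}-\hbm{q}_{i}$ and the inter-proxy error $\hbm{q}_{m}-\hbm{q}_{s}$. First I would start from the differential inequality~\eqref{equ57}, namely $\dot{V}\leq-\kappa\cdot V+\frac{1}{4\omega}\cdot\|\fbm{u}\|^{2}$, invoke Assumption~\ref{item:A2} to replace $\|\fbm{u}\|^{2}$ by its bound $\overline{\tau}^{2}=\bar{\tau}^{2}_{h}+\bar{\tau}^{2}_{e}$, and integrate exactly as in~\eqref{equ27} to obtain $V(t)\leq e^{-\kappa t}\cdot V(0)+\frac{\overline{\tau}^{2}}{4\kappa\omega}$.

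From this single estimate both sets follow. Bounding $e^{-\kappa t}\leq 1$ gives $V(t)\leq V_{0}+\frac{\overline{\tau}^{2}}{4\kappa\omega}$ for every $t\geq 0$, which will yield the invariant set~\eqref{equ59}; letting $t\to\infty$ gives $\limsup_{t\to\infty}V(t)\leq\frac{\overline{\tau}^{2}}{4\kappa\omega}$ with exponential rate $\kappa$, which will yield the globally attractive set~\eqref{equ60}. It then remains only to translate a bound on $V$ into a bound on $\|\fbm{q}_{m}-\fbm{q}_{s}\|$.

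This translation is the crux of the proof. Discarding the nonnegative sliding-surface terms and $V_{2}$ from the lower bound~\eqref{equ54} leaves an estimate of the form $V\geq\frac{p'}{2}\cdot\big(\|\fbm{q}_{m}-\hbm{q}_{m}\|^{2}+\|\fbm{q}_{s}-\hbm{q}_{s}\|^{2}+\|\hbm{q}_{m}-\hbm{q}_{s}\|^{2}\big)$ with $p'=\min(p_{m},p_{s},\hat{p})$ the relevant minimum eigenvalue. I would then write $\fbm{q}_{m}-\fbm{q}_{s}=(\fbm{q}_{m}-\hbm{q}_{m})-(\fbm{q}_{s}-\hbm{q}_{s})+(\hbm{q}_{m}-\hbm{q}_{s})$ and apply the elementary inequality $\|\fbm{a}+\fbm{b}+\fbm{c}\|^{2}\leq C\cdot(\|\fbm{a}\|^{2}+\|\fbm{b}\|^{2}+\|\fbm{c}\|^{2})$ to bound $\|\fbm{q}_{m}-\fbm{q}_{s}\|^{2}$ by a constant multiple of $\frac{2}{p'}V$, and hence by the right-hand sides of the two $V$-estimates above, producing~\eqref{equ59} and~\eqref{equ60}. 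The main obstacle is precisely this last step: because the proxies are interposed between the two robots, the physical error must be reconstructed from the three independent error coordinates stored in $V$, and the triangle-inequality constant is what controls the final Lebesgue measure of the invariant and attractive sets; by contrast, in the delay-free Corollary~\ref{corollary1} the error $\fbm{q}_{m}-\fbm{q}_{s}$ appears directly in the Lyapunov function and no such reconstruction is needed.
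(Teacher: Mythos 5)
Your proposal reproduces the paper's proof essentially verbatim: integrate~\eqref{equ57} to get~\eqref{equ61}, discard the nonnegative kinetic and $V_{2}$ terms to leave $V\geq\sum_{i}\frac{p_{i}}{2}\|\fbm{q}_{i}-\hbm{q}_{i}\|^{2}+\frac{\hat{p}}{2}\|\hbm{q}_{m}-\hbm{q}_{s}\|^{2}$, and reassemble $\fbm{q}_{m}-\fbm{q}_{s}=(\fbm{q}_{m}-\hbm{q}_{m})+(\hbm{q}_{m}-\hbm{q}_{s})-(\fbm{q}_{s}-\hbm{q}_{s})$, exactly as in~\eqref{equ62}. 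The one loose end is your unspecified constant $C$: the paper's~\eqref{equ62} implicitly takes $C=2$ to obtain $V\geq\frac{p'}{4}\|\fbm{q}_{m}-\fbm{q}_{s}\|^{2}$ and hence the factors $4/p'$ and $1/(p'\kappa\omega)$, but for three independent vectors the sharp constant is $C=3$ (attained when the three errors are equal), so the argument as you and the paper run it actually yields $6/p'$ and $3/(2p'\kappa\omega)$ --- a constant-factor discrepancy in the stated sets that does not affect their qualitative (invariance/attractivity) conclusions.
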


\begin{proof}
Time integration of~\eqref{equ57} leads to:
\begin{equation}\label{equ61}
V(t)\leq e^{-\kappa t}\cdot V(0)+\frac{\overline{\tau}^{2}}{4\kappa\omega}\textrm{,}
\end{equation}
while the definition of $V$ implies that
\begin{equation}\label{equ62}
\begin{aligned}
V\geq &\sum_{i=m,s}\frac{p_{i}}{2}\cdot\|\fbm{q}_{i}-\hbm{q}_{i}\|^{2}+\frac{\hat{p}}{2}\cdot\|\hbm{q}_{m}-\hbm{q}_{s}\|^{2}\\
\geq &\frac{p'}{4}\cdot\|\fbm{q}_{m}-\fbm{q}_{s}\|^{2}\textrm{,}
\end{aligned}
\end{equation}
and completes the proof.
\end{proof}

\subsection{Discussion}

Closed-loop bilateral teleoperation includes uncertain and practically uncontrollable user and environment dynamics. To robustly stabilize closed-loop teleoperation, this paper regards the user and environment forces as time-varying and unpredictable teleoperator inputs, and designs controllers to render the teleoperator input-to-state stable. ISS teleoperation guarantees robust position synchronization of the master and slave under user and environment perturbations~\cite{Jiang2008}, and thus, reflects slave-environment interactions to the operator~\cite{Heck2018}. According to the definitions in~\sect{sec: iss}, ISS teleoperation implies: (i) $\{\dotbm{q}_{m},\dotbm{q}_{s},\fbm{q}_{m}-\fbm{q}_{s}\}\in\mathcal{L}_{\infty}$; and (ii) $\{\dotbm{q}_{m},\dotbm{q}_{s},\fbm{q}_{m}-\fbm{q}_{s}\}\to\fbm{0}$ if $\bm{\tau}_{h}=\bm{\tau}_{e}=\fbm{0}$. Thus, ISS teleoperation matches the stability definition in~\cite{Lee2010} for bilateral teleoperation.

According to~\coro{corollary1} and~\coro{corollary2}, an invariant set~$\mathcal{S}_{I}$ and a globally attractive set~$\mathcal{S}_{A}$ characterize the master-slave position error $\fbm{q}_{m}(t)-\fbm{q}_{s}(t)$ of ISS teleoperation: the error stays in~$\mathcal{S}_{I}$ and exponentially approaches~$\mathcal{S}_{A}$ for $t\geq 0$. The Lebesgue measure of $\mathcal{S}_{I}$ and $\mathcal{S}_{A}$ can be reduced by increasing $p_{i}$, $\hat{p}$, $\omega$ and $\kappa$. The speed of~(exponential) convergence to~$\mathcal{S}_{A}$ is determined by $\kappa$. When the user and environment forces disppear, the steady-state position error becomes zero by the definition of input-to-state stability.

The controllers~\eqref{equ10} and~\eqref{equ34} assume gravity-compensated master and slave. Inaccurate gravity compensation can introduce bounded disturbances $\bm{\delta}_{i}$, $i=m,s$, in practical teleoperation systems. By including $\bm{\delta}_{i}$ in the teleoperator input $\fbm{u}^{\star}=[(\bm{\tau}_{h}+\bm{\delta}_{m})^\mathsf{T}\ (\bm{\tau}_{e}+\bm{\tau}_{s})^\mathsf{T}]^\mathsf{T}$, the proposed dynamic strategy can render the teleoperator ISS with the augmented input $\fbm{u}^{\star}$ and the same state $\fbm{x}$ as defined in Theorem 1 and Theorem 2. Alternatively, minor modifications of adaptive control techniques~[26] can be employed in the design to provide parameter estimates and to facilitate position synchronization.

\section{Conclusions}

This paper has presented a constructive dynamic interconnection and damping injection strategy for robust stabilization of bilateral teleoperation without, and with, time-varying delays. Lyapunov stability analysis has proven that the proposed strategy renders bilateral teleoperation exponentially input-to-state stable, even in the presence of time-varying delays. It has also shown that an invariant set and a globally attractive set characterize the master-slave position errors during teleoperation under the control of the proposed strategy. Suitable selection and updating of the control gains can decrease the master-slave position error to any prescribed level with a certain rate of convergence and, thus, can improve robust position tracking performance. 


Practical teleoperation systems suffer from inaccurate gravity compensation and unreliable velocity measurements. Disturbances caused by inaccurate gravity compensation can increase position tracking errors. Unreliable velocity measurements impede damping injection and the modulation of control gains and thus, can destabilize the teleoperation. Future work will focus on input-to-state stability of bilateral teleoperators without gravity compensation and velocity measurements.

\bibliography{IEEEabrv,Cooperation}
\end{document}